\def\marginpar#1{\ignorespaces}
\newtheorem{theorem}{Theorem}[section]
\newtheorem{lemma}[theorem]{Lemma}
\newtheorem{proposition}[theorem]{Proposition}
\newtheorem{corollary}[theorem]{Corollary}
\numberwithin{equation}{section}
\newcommand{\hatQ}{\widehat{Q}}
\newcommand{\Qstar}{Q^*}
\newcommand{\FY}{F}
\newcommand{\hatgam}{\widehat{\gamma}}
\newcommand{\eps}{\varepsilon}
\newcommand{\hateps}{\widehat{\eps}}
\newcommand{\tileps}{\widetilde{\eps}}
\newcommand{\hatG}{\widehat{G}}
\newcommand{\Qb}{{Q}_\bullet}
\newcommand{\hatq}{\widehat{q}}
\newcommand{\mulog}{\mu_{\log}}
\newcommand{\mum}[2]{{ \mu _{#1,#2}   }}
\newcommand{\init}[1]{{ p_{0,#1} }}
\newcommand{\initnoarg}{{ p_{0,\bullet} }}
\newcommand{\BZ}{\mathbb{Z}}
\newcommand{\BP}{\mathbb{P}}
\renewcommand{\P}{\mathbb{P}}
\newcommand{\BE}{\mathbb{E}}
\newcommand{\E}{\mathbb{E}}
\newcommand{\T}{Y}
\newcommand{\St}{S}
\newcommand{\ed}{\overset{(d)}{{}={}}}
\newcommand{\eqth}{\overset{\theta}{{}={}}}
\newcommand{\convd}{\overset{(d)}{{}\rightarrow{}}}
\newcommand{\ind}{\mathbf{1}}
\newcommand{\Pbul}{P_{\bullet}}
\newcommand{\Pb}{P_{\bullet}}
\newcommand{\Nb}{N_{\bullet}}
\newcommand{\hatN}{\widehat{N}}
\newcommand{\Gb}{G_{\bullet}}
\newcommand{\giv}{\,|\,}  
\newcommand{\tilH}{\widetilde{H}}
\begin{document}
\title[Gaps and interleaving of point processes in sampling]{Gaps and interleaving of point processes \\ in sampling from a residual allocation model} 

\author[Jim Pitman]{{Jim} Pitman}
\address{Statistics department, University of California, Berkeley.  
} \email{pitman@stat.berkeley.edu}


\author[Yuri Yakubovich]{{Yuri} Yakubovich}
\address{St. Petersburg State University, Russia.
} \email{y.yakubovich@spbu.ru}

\date{\today} 
\begin{abstract}
This article presents a limit theorem for the gaps $\hatG_{i:n}:= X_{n-i+1:n} - X_{n-i:n}$ between order statistics $X_{1:n} \le \cdots \le X_{n:n}$ of a sample  of size $n$ from a random discrete distribution
on the positive integers $(P_1, P_2, \ldots)$ governed by a residual  allocation model (also called a Bernoulli sieve) 
$P_j:= H_j \prod_{i=1}^{j-1}(1-H_i)$ for a sequence of independent random hazard variables
$H_i$ which are identically distributed according to some distribution of $H \in (0,1)$ such that $- \log(1 - H)$ has a non-lattice distribution with finite mean $\mulog$.
As $n\to \infty$ the finite dimensional distributions of the gaps $\hatG_{i:n}$ converge to those of limiting gaps $G_i$ which are the numbers of points in a stationary renewal process with i.i.d.\ 
spacings $- \log(1 - H_j)$ between times $T_{i-1}$ and $T_i$ of births in a Yule process, that is $T_i := \sum_{k=1}^i \eps_{k}/k$ for a sequence of i.i.d.\ exponential variables $\eps_k$ 
with mean 1.  A consequence is that the mean of $\hatG_{i:n}$ converges to the mean of $G_i$, which is $1/(i \mulog )$.
This limit theorem simplifies and extends a result of Gnedin, Iksanov and Roesler for the Bernoulli sieve.
\end{abstract}

\maketitle
\textit{Key words :} GEM distribution; Yule process, stationary renewal process, interleaving of simple point processes, stars and bars duality

\textit{AMS 2010 Mathematics Subject Classification: }  60G70;  60G09;  60F05; 60J80   

\setcounter{tocdepth}{1}
\tableofcontents

\section{Introduction}

Let $P_\bullet:= (P_1,P_2, \ldots)$ be a random discrete distribution on the positive integers described by a 
{\em residual allocation} or {\em stick-breaking} model (RAM), 
\cite{MR0010342} 
\cite{sawyer1985sampling} 
\cite{MR0411097}, 
also known as a {\em Bernoulli sieve} 
\cite{MR2044594} 
\cite{MR2735350} 
in which 
\begin{equation}
\label{stickbreak}
P_j:= H_j \prod_{i=1}^{j-1} ( 1 - H_i)
\end{equation}
where the $H_i \in (0,1)$, called {\em residual fractions}, {\em random discrete hazards}, or {\em factors}, are 
independent and identically distributed (i.i.d.)\ according to some distribution of $H \in (0,1)$.
Given $\Pb$, let $X_1,\dots, X_n$ be an i.i.d.\  sample of size $n$ from $\Pb$, and 
define counting variables 
\begin{equation}
\label{boxcounts}
N_{b:n} := \sum_{i=1}^n \ind ( X_i = b)  \qquad (b =1,2,\dots) .
\end{equation}
Study of the {\em reversed counts}, coming back from the maximum of the sample,
\begin{equation}
\label{hatn}
\hatN_{\bullet:n}:= (N_{M_n-i:n}, i = 0,1, \ldots), \mbox{ where  } M_n:= \max \{ b : N_b >0 \} = \max_{1 \le i \le n} X_i
\end{equation}
has been motivated by a number applications in ecology, population biology, and computer science 
\cite{MR0411097} 
\cite{sawyer1985sampling} 
\cite{MR827330} 
\cite{MR2735350} 
\cite{pitman-yakubovich-gem}. 
A model of particular interest in these applications is the 
{\em GEM$(0,\theta)$ model}, when it is assumed that $H$ has the beta$(1,\theta)$ distribution with density $\theta (1-u)^{\theta-1}$ at $u \in (0,1)$.
To provide some notation, let
\begin{equation}
\label{muij}
\mum{i}{j}:= \E H^i ( 1 - H)^j \eqth  \frac{ ( 1)_i (\theta)_j }{ (1 + \theta)_{i+j} }
\end{equation}
where $\eqth$ indicates an evaluation for the GEM$(0,\theta)$ model with $\P(H>u) \eqth (1-u)^\theta$, 
and 
$$(x)_i:= x ( x+1 ) \cdots (x+i-1) = \frac{ \Gamma( x + i)} {\Gamma(x)}$$
is 
Pochhammer's rising factorial function.
Let
\begin{equation}
\label{eq:mulog}
\mulog:= \E - \log ( 1 - H) = \sum_{m = 1}^\infty \frac{ \mum{m}{0}}{ m} = \int_0^1 \frac{ \BP( H > u ) }{1-u} \, du  \eqth \frac{1}{\theta}.
\end{equation}
The starting point of this article is the following limit theorem:
\begin{theorem}{\em (Gnedin, Iksanov and Roesler \cite[Theorem 2.1]{MR2508790})}. 
\label{thm:gir}
If
\begin{equation}
\label{nonlat}
\mbox{$- \log(1 - H)$ has a non-lattice distribution with finite mean $\mulog$}
\end{equation}
then
\begin{equation}
\label{gir:conv}
\hatN_{\bullet:n} \convd N_{\bullet} \mbox{ as } n \to \infty,
\end{equation}
 meaning that the finite dimensional distributions of the sequence $\hatN_{\bullet:n}$ converge to those of a limit sequence 
$N_\bullet$, which are determined by the formula
\begin{equation}
\label{gir:form}
\P(N_i = n_i, 0 \le i \le k) = \frac{ (n_0 + \cdots + n_k - 1)! }{n_0! \cdots n_k! } \frac{ \prod_{i = 0}^{k} \mum{n_i}{n_0 + \cdots + n_{i-1} } } {\mulog }
\end{equation}
for every finite sequence of non-negative integers $(n_0, \ldots, n_k)$ with $n_0 >0$.
\end{theorem}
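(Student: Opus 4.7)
The plan is to realize the sample through an auxiliary layer of i.i.d.\ standard exponentials. Let $\mathcal{E}_1,\ldots,\mathcal{E}_n$ be i.i.d.\ $\mathrm{Exp}(1)$ independent of $(H_j)$, and set $E_j:=-\log(1-H_j)$, $S_k:=E_1+\cdots+E_k$, so $(S_k)$ is a non-lattice renewal process of mean $\mulog$. Since $P_b=e^{-S_{b-1}}-e^{-S_b}$, the coupling $X_i=b\Leftrightarrow\mathcal{E}_i\in[S_{b-1},S_b)$ produces the correct joint law. Writing $M^*_n:=\max_{i\le n}\mathcal{E}_i$, the index $M_n$ is the renewal box containing $M^*_n$, and $\hatN_{i:n}$ counts how many of the order statistics $\mathcal{E}_{(j:n)}$ lie in the $i$-th box back from $M^*_n$.

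The next step rests on two classical ingredients. First, the standard representation of exponential order statistics: there exist i.i.d.\ $\eps_k\sim\mathrm{Exp}(1)$, independent of $M^*_n$, with
\[
M^*_n-\mathcal{E}_{(n-j:n)}\ed T_j:=\sum_{k=1}^{j}\eps_k/k,\qquad 0\le j<n,
\]
so in the downward-from-max coordinate $y=M^*_n-x\ge 0$ the sample points sit at the Yule birth times $T_0=0<T_1<\cdots$, while the widths of the boxes back from $M^*_n$ are $A_n,E_{M_n-1},E_{M_n-2},\ldots$, with $A_n:=M^*_n-S_{M_n-1}$. Second, the key renewal theorem: since $M^*_n\to\infty$ in probability and is independent of $(S_k)$, the non-lattice hypothesis \eqref{nonlat} implies
\[
(A_n,E_{M_n-1},E_{M_n-2},\ldots)\convd(\widehat A,\widehat E_1,\widehat E_2,\ldots),
\]
where $\widehat E_i$ are i.i.d.\ copies of $E$, $\widehat A$ has the stationary age density $\BP(E>a)/\mulog$, and the whole limit is independent of $(T_j)$. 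Interleaving the two independent sequences (``stars'' $T_j$ against ``bars'' $\widehat A,\widehat A+\widehat E_1,\ldots$) yields convergence of the finite-dimensional distributions of $\hatN_{\bullet:n}$ to those of $N_\bullet$, where $N_i$ is the number of $T_j$ in the $i$-th piece of the partition of $[0,\infty)$ with breakpoints $\widehat A<\widehat A+\widehat E_1<\widehat A+\widehat E_1+\widehat E_2<\cdots$.

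To recover the explicit formula \eqref{gir:form}, I would use the counting process $Z_t:=\#\{j\ge 0:T_j\le t\}$, the standard Yule birth chain starting from $1$, with marginal $\BP(Z_t=m)=e^{-t}(1-e^{-t})^{m-1}$ and (by the branching property) transitions $\BP(Z_{s+t}=m'\giv Z_s=m)=\binom{m'-1}{m-1}e^{-mt}(1-e^{-t})^{m'-m}$. Writing $m_i:=n_0+\cdots+n_i$, $s_0:=\widehat A$, $s_i:=\widehat A+\widehat E_1+\cdots+\widehat E_i$, the event $\{N_0=n_0,\ldots,N_k=n_k\}$ is a.s.\ $\{Z_{s_i}=m_i,\,0\le i\le k\}$. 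Conditioning on the renewal data and using the Markov property decomposes $\BP(N_0=n_0,\ldots,N_k=n_k)$ into an integral of $e^{-a}(1-e^{-a})^{n_0-1}$ against $\BP(E>a)/\mulog$ times $\prod_{i=1}^{k}\binom{m_i-1}{m_{i-1}-1}\BE\bigl[(1-H)^{m_{i-1}}H^{n_i}\bigr]$. Fubini plus the substitution $u=1-e^{-a}$ in the first integral produces $\mum{n_0}{0}/(n_0\mulog)$; each remaining expectation equals $\mum{n_i}{m_{i-1}}$ by the very definition of $\mum{\cdot}{\cdot}$; and the binomial coefficients telescope so that the overall prefactor becomes $(m_k-1)!/(n_0!\cdots n_k!)$, reproducing \eqref{gir:form}.

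The main obstacle I anticipate is the renewal step: the key renewal theorem is typically stated along deterministic times $t\to\infty$, while here one evaluates at the random cutoff $M^*_n$. The independence $M^*_n\perp(S_k)$ together with $M^*_n\to\infty$ in probability reduces this to a Slutsky-type transfer, but some care is needed to obtain the joint convergence of $(A_n,E_{M_n-1},\ldots,E_{M_n-k})$ for each fixed $k$. Once this is in hand, the identification of the limit as an interleaving of the Yule process and a stationary renewal process, and the subsequent verification of \eqref{gir:form}, are essentially mechanical.
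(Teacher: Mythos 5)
Your route is sound and it reproduces \eqref{gir:form} correctly, but it is worth noting how it sits relative to the paper. The paper's formal proof of the (stronger) Theorem~\ref{thm:main} does not argue at the level of point processes at all: it treats the tail counts $\Qstar_{\bullet:n}$ as a Markov chain with kernel $q^*$, reverses it from the absorption time via the general Lemma~\ref{lmm:mcrev}, and only needs Blackwell's theorem \emph{in expectation}, through the potential limit $g_{m:n}\to 1/(m\mulog)$ of Lemma~\ref{lmm:potent}; the Yule/stationary-renewal interleaving then appears afterwards, in Corollary~\ref{crl:yule}, by essentially the argument you propose (and the paper admits that this distributional argument ``glosses over a few details''). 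So your proposal coincides with the paper's secondary, more pictorial argument rather than its primary one; what it buys is a direct derivation of the product formula \eqref{gir:form} (your telescoping of $\binom{m_i-1}{m_{i-1}-1}$ and the substitution giving $\mum{n_0}{0}/(n_0\mulog)$ are exactly right, and agree with the paper's entrance law \eqref{entrancelaw} at $k=0$), at the price of needing the key renewal theorem as a distributional limit at a random time rather than only Blackwell in expectation.

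One step as written is wrong and happens to be the one carrying the crucial independence structure: you assert that the $\eps_k$ in the Sukhatme--R\'enyi representation are ``independent of $M^*_n$''. They are not, and cannot be made so by any coupling, since $M^*_n=\sum_{k=1}^n\eps_k/k$ contains $T_j$ as a partial sum; even after centering, $M^*_n-\log n$ remains dependent on the top spacings. What is true, and what your argument actually needs, is (a) $M^*_n$ is independent of the renewal process $(S_k)$, and (b) for each fixed $J$, the vector $(T_j)_{j\le J}$ is independent of $\mathcal{E}_{(n-J:n)}$, with $M^*_n=\mathcal{E}_{(n-J:n)}+T_J$. The fix is to condition on $(T_j)_{j\le J}=(t_j)$: given this, the backward renewal data is viewed from $\mathcal{E}_{(n-J:n)}+t_J$, where $\mathcal{E}_{(n-J:n)}\to\infty$ in probability and is independent of the renewal process and of the conditioning; the non-lattice key renewal theorem (transferred to such independent random times by the dominated-convergence/Slutsky argument you sketch) then gives the same stationary limit for every $t_J$, which is precisely the asymptotic independence of the limiting backward renewal data $(\widehat A,\widehat E_1,\widehat E_2,\ldots)$ from the Yule times $(T_j)$. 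Your flagged ``main obstacle'' addresses the random-time transfer but not this dependence between $M^*_n$ and $(T_j)$, so the conditioning step should be made explicit; with it, the interleaving limit and the computation of \eqref{gir:form} go through as you describe.
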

Formula \eqref{gir:form} corrects a confusing mistake in the corresponding formula of \cite[Theorem 2.1]{MR2508790}. 
The proof of this result 
in \cite{MR2508790} 
involved a construction of $N_\bullet$ from a limiting point process, recalled later in \eqref{nfromxi}.
This article offers a simpler approach to an extended version of Theorem \ref{thm:gir}, with a simpler point process representation of the limiting sequence $\Nb$, 
and some corollaries which clarify special properties of the limit for the GEM$(0,\theta)$ model.

Let
\begin{equation}
\label{eq:defmax}
X_{1:n}:= \min_{1 \le i \le n} X_i \le X_{1:n} \le X_{2:n} \le  \cdots \le X_{n:n} = M_n:= \max_{1 \le i \le n} X_i ,
\end{equation}
denote the order statistics of the sample $X_1, \ldots, X_n$ from the RAM $\Pb$.
It was recently shown in  \cite{pitman-yakubovich-gem} 
that most known properties of GEM$(0,\theta)$ samples are consequences of the following simple description 
 of the {\em sequence of gaps} $\hatG_{\bullet:n}$ between order statistics, in reversed order
\begin{equation}
\label{gaps}
\hatG_{i:n} := X_{n+1-i:n} - X_{n-i:n}  \qquad (1 \le i \le n)
\end{equation}
with the convention $X_{0:n}:= 1$, so that $\hatG_{n:n}:= X_{1:n} - 1$. According to the main result of 
\cite{pitman-yakubovich-gem} 
\begin{quote}
{\em
in sampling from GEM$(0,\theta)$ the $\hatG_{i:n}$ are independent, with geometric distributions whose parameters depend only on $i$ and $\theta$, and not on the sample size $n \ge i$, according to the equality in distribution
\begin{equation}
\label{gapstheta}
(\hatG_{i:n} , 1 \le i \le n) \ed  (G_i, 1 \le i \le n)
\end{equation}
where $G_\bullet := (G_i, i = 1,2, \ldots)$ is an infinite sequence of independent random variables such that
$G_i$ has the geometric$( i/(i+\theta))$ distribution on $\{0,1, \ldots\}$ with $\E G_i \eqth \theta/i$ and 
\begin{equation}
\label{gapsthetageo}
\P(G_i \ge k ) \eqth \left( \frac{\theta}{i+\theta}\right)^k \qquad ( k = 0,1,2, \ldots).
\end{equation}
}
\end{quote}
It is an obvious consequence of \eqref{gapstheta} that $\hatG_{\bullet:n} \convd \Gb$ in the GEM$(0,\theta)$ model.
The aim of the present study is
to relate this limit sequence of reversed gaps $\Gb$ 
to the limiting reversed counts $\Nb$ in \eqref{gir:conv}, first of all for the GEM$(0,\theta)$ model,
then to describe a corresponding limit sequence of reversed gaps $\Gb$ for a more general RAM.

In sampling from any random discrete distribution $\Pb$,
the two sequences $\hatG_{\bullet:n}$ and $\hatN_{\bullet:n}$ are related 
via the {\em time-reversed tail count sequence} $\hatQ_{\bullet:n}$ defined by the partial sums of $\hatN_{\bullet:n}$.
So, using the well-known balls in the box description of the sampling procedure, recalled in Section \ref{sec:starsbars} below,
\begin{equation}
\label{qfromn}
\hatQ_{k:n}:= \sum_{i=0}^k \hatN_{i:n} := \sum_{i = 0}^k N_{M_n-i:n} \qquad( k = 0,1, 2 , \ldots)
\end{equation}
is the number of balls in the last $k+1$  boxes when $n$ balls are distributed, 
counting back from the rightmost occupied box. The count of balls in that box is the initial reversed tail count,
indexed by $k = 0$:
\begin{equation}
\label{qfromnzero}
\hatQ_{0:n}:= \hatN_{0:n} := N_{M_n:n}  := \sum_{i = 1}^n \ind ( X_i = M_n) > 0.
\end{equation}
For each $g \ge 0$, a value $j < n $ appears exactly $g$ times in the sequence $\hatQ_{\bullet:n}$ iff  there is
a corresponding gap between order statistics $\hatG_{j:n} = g$. Thus 
\begin{equation}
\label{ghatfromq}
\hatG_{j:n} = \sum_{k=0}^n \ind ( \hatQ_{k,n} = j) \qquad ( 1 \le j < n ) .
\end{equation}

For a RAM subject to \eqref{nonlat}, the connection \eqref{qfromn}--\eqref{ghatfromq}
between the reversed gaps $\hatG_{\bullet:n}$ and  reversed counts $\hatN_{\bullet:n}$ shows that
convergence in distribution of $\hatG_{\bullet:n}$ to a limiting sequence of gaps $\Gb$ holds jointly with
the convergence in distribution \eqref{gir:conv}
of $\hatN_{\bullet:n}$ to $\Nb$, 
with $\Gb$ the sequence of occupation times of states by
the partial sums $\Qb$ of $\Nb$.
The conclusion of this argument is expressed by the following extension of Theorem \ref{thm:gir}.

\begin{theorem} 
\label{thm:main}
Suppose the common distribution of factors $H$ in the RAM, with  moments \eqref{muij}, 
 is such that $- \log ( 1 - H) $ is non-lattice with finite mean $\mulog$.
Then as $n \to \infty$ there is the joint convergence of finite-dimensional distributions
\begin{equation}
\label{nglims} 
(\hatQ_{\bullet:n}, \hatN_{\bullet:n}, \hatG_{\bullet:n} ) \convd (\Qb, \Nb, \Gb)
\end{equation}
with limit sequences $\Qb$ and $\Nb$ indexed by $k = 0,1,2, \ldots$, and $\Gb$ indexed by $j = 1,2, \ldots$.
The joint distribution of these three limit sequences is defined as follows:
\begin{itemize}
\item
The sequence $\Qb$ is a Markov chain with stationary transition probability matrix
\begin{equation}
\label{backfor}
p_{m,n}:=  \binom{n - 1 }{ m - 1 } \mum{ n- m }{m}   \qquad (1 \le m \le n )
\end{equation}
where $\mum{ n- m }{m}:= \E H^{n-m} (1-H)^m$, and initial distribution 
\begin{equation}
\label{entrancelaw}
\P( Q_0 = m ) =  \frac{ \mum{ m}{0} }{ m \, \mulog } \qquad ( m = 1, 2, \ldots )
\end{equation}
which is the limit distribution of $N_{M_n:n}$, the number of balls in the last occupied box,  as $n \to \infty$.
\item
The limit sequence of reversed counts $\Nb$ is the difference sequence of $\Qb$, with $N_0:= Q_0$ and
$N_i:= Q_{i} - Q_{i-1}$ for $i = 1,2, \ldots$.
\item
The limit sequence of reversed gaps $\Gb$ is the sequence of occupation counts of the Markov chain $\Qb$:
\begin{equation}
G_j := \sum_{k = 0}^\infty \ind ( Q_k = j )  \qquad (j = 1,2, \ldots ).
\end{equation}
\end{itemize}
\end{theorem}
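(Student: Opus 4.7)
The plan is to deduce everything from Theorem~\ref{thm:gir} by exploiting the deterministic identities \eqref{qfromn} and \eqref{ghatfromq}. Since Theorem~\ref{thm:gir} delivers $\hatN_{\bullet:n} \convd \Nb$ in the sense of finite-dimensional distributions, and each coordinate $\hatQ_{k:n}$ is the linear function $\hatN_{0:n}+\cdots+\hatN_{k:n}$ of finitely many coordinates of $\hatN_{\bullet:n}$, the continuous mapping theorem immediately gives joint finite-dimensional convergence of $(\hatN_{\bullet:n}, \hatQ_{\bullet:n})$ to $(\Nb, \Qb)$, where $\Qb$ is by construction the partial-sum sequence of $\Nb$. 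This handles the first two components of \eqref{nglims} together with the stated difference relation between $\Nb$ and $\Qb$.

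The next task is to recognise the Markov structure of $\Qb$ by a direct change of variables in the GIR formula \eqref{gir:form}. Writing $q_i := n_0 + n_1 + \cdots + n_i$ with $q_{-1} := 0$ and substituting $n_i = q_i - q_{i-1}$, formula \eqref{gir:form} becomes
\begin{equation*}
\P(Q_i = q_i,\ 0 \le i \le k) = \frac{(q_k - 1)!}{q_0!\,(q_1 - q_0)! \cdots (q_k - q_{k-1})!\,\mulog}\,\prod_{i=0}^k \mum{q_i - q_{i-1}}{q_{i-1}}.
\end{equation*}
The telescoping identity $\prod_{i=1}^k (q_i-1)!/(q_{i-1}-1)! = (q_k-1)!/(q_0-1)!$ then rewrites the right-hand side as
\begin{equation*}
\frac{\mum{q_0}{0}}{q_0\,\mulog}\,\prod_{i=1}^k \binom{q_i-1}{q_{i-1}-1}\,\mum{q_i - q_{i-1}}{q_{i-1}},
\end{equation*}
which is exactly the Markov chain with initial law \eqref{entrancelaw} and transition kernel \eqref{backfor}.

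For the gap component $\hatG_{\bullet:n}$ I rely on the a.s.\ identity \eqref{ghatfromq}. The limit chain $\Qb$ is non-decreasing (its increments $N_i$ are nonnegative), and each state $m$ is held for a geometric number of steps with parameter $1 - p_{m,m} = 1 - \mum{0}{m} > 0$, since \eqref{nonlat} forces $H$ to be non-degenerate; consequently $Q_k \to \infty$ almost surely, and the occupation count $G_j := \sum_{k \ge 0} \ind(Q_k = j)$ is a.s.\ finite for every $j$. Given $J \ge 1$ and $\eps > 0$, choose $K$ with $\P(Q_K > J) > 1 - \eps$. On the event $\{Q_K > J\}$ the truncated sum $\sum_{k=0}^K \ind(Q_k = j)$ agrees with $G_j$ for every $j \le J$, and likewise $\sum_{k=0}^K \ind(\hatQ_{k:n} = j)$ agrees with $\hatG_{j:n}$ on the corresponding event $\{\hatQ_{K:n} > J\}$. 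Since the truncated occupation counts are continuous functions of the already-convergent finite tuple $(\hatQ_{0:n}, \ldots, \hatQ_{K:n})$, the joint finite-dimensional convergence \eqref{nglims} follows by letting $\eps \downarrow 0$. I expect this last truncation step, where one must pass to the limit in an infinite occupation-time functional, to be the only genuine technical point; the monotonicity of $\Qb$ makes it routine rather than delicate.
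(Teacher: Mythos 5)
Your proposal is correct, but it takes a genuinely different route from the paper's own proof. You take Theorem \ref{thm:gir} as a black box and deduce Theorem \ref{thm:main} from it: partial summation plus the continuous mapping theorem for the pair $(\hatN_{\bullet:n},\hatQ_{\bullet:n})$, the change of variables $q_i=n_0+\cdots+n_i$ turning \eqref{gir:form} into the product of the entrance law \eqref{entrancelaw} and the transition matrix \eqref{backfor} (your telescoping of the factorials checks out), and a monotonicity-plus-truncation argument to carry the identity \eqref{ghatfromq} over to the limiting gap coordinates. The paper itself notes that ``Theorem \ref{thm:main} can easily be deduced from Theorem \ref{thm:gir}, and vice versa,'' so nothing is circular; but its own proof in Section \ref{sec:proofs} deliberately avoids invoking Theorem \ref{thm:gir}. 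Instead it works with the tail-count chain $\Qstar_{\bullet:n}$ of \eqref{qstartdef}, reverses it from the step before absorption via Hunt-style time reversal (Lemma \ref{lmm:mcrev}), and identifies the limit $g_{m:n}\to 1/(m\,\mulog)$ of the potential function by Blackwell's renewal theorem applied to the renewal process with spacings $-\log(1-H_i)$, looking back from the maximum of exponential order statistics (Lemma \ref{lmm:potent}). What the paper's route buys is a self-contained argument that reproves and extends the Gnedin--Iksanov--Roesler theorem rather than importing its hardest analytic ingredient, and it delivers directly the potential/entrance-law structure exploited in Corollaries \ref{crl:g}--\ref{crl:yule}; what your route buys is brevity, and your truncation step for the gaps is in fact spelled out more carefully than the paper's one-line ``follows easily'' remark. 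One small slip: positivity of $1-p_{m,m}=1-\mum{0}{m}$ needs only $\P(H>0)=1$ (so $\E(1-H)^m<1$), which is built into the model; the non-lattice hypothesis \eqref{nonlat} is not what does that work---it is needed for the renewal-theoretic input, i.e.\ for Theorem \ref{thm:gir} itself.
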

It should be clear from the above discussion that Theorem \ref{thm:main} can easily be deduced from
Theorem \ref{thm:gir}, and vice versa. In particular, the formula \eqref{entrancelaw} for the initial
distribution of $Q_0 = N_0$ is the instance $k=0$ of the formula \eqref{gir:form} for the limiting distribution of
$N_{M_n:n}$, which was first found in \cite{MR2538083}. 
See also \cite[Theorem 6.1]{MR2735350}. 
The equivalence of formula \eqref{gir:form} for general $k \ge 0$ with the Markov property of $\Qb$ expressed in Theorem \ref{thm:main} is also easily checked.
Further study of the chain $\Qb$ in the GEM$(0,\theta)$ case leads to a remarkable connection to the Riemann zeta function \cite{zeta}. 
Theorem \ref{thm:main} will be proved in Section \ref{sec:proofs}, along with 
the following corollaries.

\begin{corollary}
\label{crl:g}
In the setting of Theorem \ref{thm:main}: 
\begin{itemize}
\item The distribution of $G_j$, 
the limit in distribution of $\hatG_{j:n}$ as $n \to \infty$, 
is the zero-modified geometric distribution
with parameters $(h_j, 1 - \mum{0}{j})$, meaning that
\begin{equation}
\label{zmodmum}
\P(G_j \ge k) =  h_j  \, \mum{0}{j}^{k-1}  \qquad ( j, k = 1,2, \ldots )
\end{equation}
where
\begin{equation}
\label{hmudef}
h_j: = \P(G_j \ge 1)  =  \frac{ 1 - \mum{0}{j} } { j \mulog } \qquad ( j = 1,2, \ldots).
\end{equation}
\item 
The conditional distribution of $G_j$ given $G_j \ge 1$ is the
geometric$( 1 - \mum{0}{j})$ distribution on $\{1,2, \ldots\}$. 
\item All moments of $G_j$ are finite,  in particular
\begin{equation}
\label{valposg}
\E G_j  = \frac{ h_j  }{1 - \mum{0}{j}}  = \frac{ 1 } {j \mulog} 
\end{equation}
and there is convergence of moments $\lim_{n\to\infty} \E \hatG_{j:n} ^p = \E G_j^p$ for every $p\ge 0$.
\end{itemize}
\end{corollary}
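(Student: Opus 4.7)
The plan is to read off all three parts of Corollary \ref{crl:g} from the Markov chain description of $\Qb$ in Theorem \ref{thm:main}, using that $G_j$ is the number of visits of $\Qb$ to state $j$. To begin, observe from \eqref{backfor} that $p_{m,n} = 0$ for $n < m$, so the chain is non-decreasing and each state $j$ is visited during a single contiguous (possibly empty) run of times. The self-loop probability at $j$ is $p_{j,j} = \binom{j-1}{j-1}\mum{0}{j} = \mum{0}{j}$, and applying the strong Markov property at the first entry time $\tau_j := \inf\{k : Q_k = j\}$ shows that on $\{\tau_j < \infty\}$ the total time spent at $j$ is geometric on $\{1,2,\ldots\}$ with success parameter $1 - \mum{0}{j}$. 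This gives the conditional distribution in the second bullet, and writing $h_j := \P(\tau_j < \infty) = \P(G_j \geq 1)$ yields $\P(G_j \geq k) = h_j\,\mum{0}{j}^{k-1}$ for $k \geq 1$, i.e.\ the zero-modified geometric form \eqref{zmodmum} of the first bullet.

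To identify $h_j$, the tail formula just established gives $\E G_j = h_j/(1-\mum{0}{j})$, so it suffices to prove the mean identity $\E G_j = 1/(j\mulog)$ of \eqref{valposg}. The cleanest route is to invoke the stationary renewal process / Yule process representation of $\Gb$ announced in the abstract: $G_j$ counts the points of a stationary renewal process of intensity $1/\mulog$ (inter-arrival $\eta := -\log(1-H)$) inside an independent interval of length $L_j := T_j - T_{j-1} = \eps_j/j$, where $\eps_j$ is exponential of mean $1$. By stationarity, $\E(G_j\giv L_j) = L_j/\mulog$, so $\E G_j = 1/(j\mulog)$, hence $h_j = (1-\mum{0}{j})/(j\mulog)$. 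As a sanity check one may also compute $\P(G_j \geq k)$ from the renewal picture directly, conditioning on the first renewal time with stationary density $\P(\eta > s)/\mulog$, the subsequent $\eta_i$, and the exponential law of $L_j$; the Laplace-transform identities $\E e^{-j\eta} = \mum{0}{j}$ and $\int_0^\infty e^{-js}\P(\eta > s)\,ds = (1-\mum{0}{j})/j$ recover \eqref{zmodmum} directly.

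For the last bullet, finiteness of all moments of $G_j$ is immediate from the geometric tail in \eqref{zmodmum}, and \eqref{valposg} is the case $p=1$. The convergence $\lim_n \E\hatG_{j:n}^p = \E G_j^p$, given the distributional convergence $\hatG_{j:n}\convd G_j$ from Theorem \ref{thm:main}, reduces to uniform integrability of $\hatG_{j:n}^p$ in $n$. I would establish this by producing a uniform geometric-type tail $\P(\hatG_{j:n} \geq k) \leq C_j r_j^k$ with $r_j < 1$, obtained by dominating $\hatG_{j:n}$ by the gap between consecutive top order statistics of the underlying $-\log$-scaled sample and using finite-$n$ analogues of the renewal estimates above, with \eqref{nonlat} ensuring uniformity in $n$. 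This uniform integrability step, rather than the distributional identification, is the main technical obstacle; once it is in place the moment convergence is routine.
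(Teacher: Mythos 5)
Your proposal is correct, and it reaches the first two bullets exactly as the paper does: since $\Qb$ is weakly increasing with self-transition probability $p_{j,j}=\mum{0}{j}$ from \eqref{backfor}, the occupation count $G_j$ is zero-modified geometric and conditionally geometric$(1-\mum{0}{j})$; this is precisely the content of Proposition \ref{prp:transmc}, which is what the paper invokes. Where you genuinely diverge is in identifying $h_j$: the paper reads off $\E G_j=g_j=1/(j\mulog)$ directly from the potential function of the limit chain, already computed in Lemma \ref{lmm:potent} and carried through Lemma \ref{lmm:mcrev} in the proof of Theorem \ref{thm:main}, whereas you derive the mean from the stationary renewal/Yule representation of Corollary \ref{crl:yule}, $G_j=N^*_S(Y_{j-1},Y_j]$ with $\E(G_j\giv Y_{j-1},Y_j)=(Y_j-Y_{j-1})/\mulog$ and $Y_j-Y_{j-1}=\eps_j/j$. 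Both are valid (Corollary \ref{crl:yule} is established independently of Corollary \ref{crl:g}); the paper's route is shorter because the potential is already in hand, while yours buys a transparent probabilistic explanation of the value $1/(j\mulog)$. On the moment convergence, your reduction to uniform integrability is right and your sketched bound can be completed — conditioning on the interval length and using the exact R\'enyi fact that $\eps_{n-j+1:n}-\eps_{n-j:n}\ed \eps/j$ for every $n\ge j$, independent of the renewal process, gives $\P(\hatG_{j:n}\ge k)\le \bigl(\E e^{j\log(1-H)}\bigr)^{k-1}=\mum{0}{j}^{k-1}$ uniformly in $n$ — but two remarks: the non-lattice hypothesis \eqref{nonlat} plays no role in this uniformity (it is needed only for the distributional convergence via Blackwell's theorem), and the step is lighter than you suggest, since for every $n>j$ the prelimit gap $\hatG_{j:n}$ is itself the occupation count of state $j$ by the non-increasing tail-count chain $\Qstar_{\bullet:n}$ with self-transition probability $q^*(j,j)=\mum{0}{j}$, so Proposition \ref{prp:transmc} applied at finite $n$ already yields $\P(\hatG_{j:n}\ge k)\le \mum{0}{j}^{k-1}$ for all $n$, and the moment convergence is immediate from $\hatG_{j:n}\convd G_j$.
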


\begin{corollary}
\label{crl:gth}
In the setting of Theorem \ref{thm:main}, for a RAM with i.i.d.\ factors,
the following four conditions are equivalent:
\begin{itemize}
\item the model is GEM$(0,\theta)$, meaning that $H$ is beta$(1,\theta)$, for some $\theta >0$;
\item distribution of $G_j$ is geometric$(p_j)$ on $\{0,1, \ldots\}$ for some $p_j$, for all $j = 1,2, \ldots$;
\item the limiting gaps $G_j$ are independent random variables;
\item for the prelimit gaps the probability $\P(\hatG_{1:n}\ge 1)$ does not depend on $n$. 
\end{itemize}
Then $p_j=j/(j+\theta)$ and the formulas of Corollary \ref{crl:g}
hold with 
\begin{equation}
\label{thetavals}
\mulog \eqth \frac{1} {\theta}, 
\qquad h_j \eqth \mum{0}{j} \eqth \frac{ \theta }{ j + \theta }.
\end{equation}
\end{corollary}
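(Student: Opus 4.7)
The plan is to close the four-way equivalence by showing (1) implies each of (2), (3), (4), and then each of (2), (3), (4) implies (1) by a separate argument. The forward direction is immediate from \eqref{gapstheta}: under GEM$(0,\theta)$, the reversed gap sequence $(\hatG_{i:n})_{1 \le i \le n}$ has the same joint law as the independent geometric$(j/(j+\theta))$ sequence $(G_i)$, so (2), (3), and the $n$-invariance in (4) all follow at once, and the explicit formulas in the final clause reduce to direct calculations with beta$(1,\theta)$ moments.

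The implication (2) $\Rightarrow$ (1) is the most direct: matching the zero-modified geometric formula \eqref{zmodmum} against a genuine geometric on $\{0,1,\ldots\}$ forces $h_j = \mum{0}{j}$ for every $j$, and inserting this into \eqref{hmudef} gives $\mum{0}{j} = \theta/(\theta+j)$ with $\theta := 1/\mulog$. These are precisely the moments of the beta$(\theta,1)$ law of $1-H$, so the Hausdorff moment problem on $[0,1]$ identifies $H$ as beta$(1,\theta)$.

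For (3) $\Rightarrow$ (1), the key reduction is that mutual independence of the counts $G_j$ is equivalent to mutual independence of the indicator events $A_j := \{G_j > 0\}$, that is, the events that $\Qb$ visits $j$. This holds because, by the Markov structure of $\Qb$ and the self-loop probability $\mum{0}{j}$ at state $j$, each $G_j$ factors as $\ind_{A_j}\cdot G_j'$ with $G_j'$ a positive-geometric$(1-\mum{0}{j})$ variable that, conditional on $A_j$, is independent of the chain's evolution after leaving state $j$; the joint law of $(G_{j_1},\ldots,G_{j_k})$ therefore factorises through that of $(\ind_{A_{j_1}},\ldots,\ind_{A_{j_k}})$. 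Pairwise independence of $A_j$ and $A_{j+1}$, combined with the transition formula \eqref{backfor} and the identity $\mum{1}{j} = \mum{0}{j} - \mum{0}{j+1}$, yields the recurrence $h_{j+1}(j+1-h_j) = j h_j$. The substitution $k_j := 1/h_j$ linearises this to $k_j/j - k_{j+1}/(j+1) = 1/(j(j+1))$, which telescopes to $h_j = \theta/(j+\theta)$ with $\theta := h_1/(1-h_1)$. Dominated convergence forces $\mum{0}{j} \to 0$, so the formula $\mum{0}{j} = 1 - j\mulog h_j$ pins $\mulog = 1/\theta$, and the moment argument of (2) $\Rightarrow$ (1) concludes.

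For (4) $\Rightarrow$ (1), I would start from \eqref{stickbreak} to write
\begin{equation*}
\P(\hatG_{1:n}\ge 1) = \P(N_{M_n:n} = 1) = n\mum{1}{0}\int_0^\infty e^{-y}(1-e^{-y})^{n-1}\,U(dy),
\end{equation*}
where $U$ is the renewal measure of the i.i.d.\ sequence $E_i := -\log(1-H_i)$ with the starting point $Y_1 = 0$ included, so that $\int e^{-sy}U(dy) = 1/(1-\hat F(s))$ for $\hat F$ the Laplace transform of $E_1$. Binomial expansion of $(1-e^{-y})^{n-1}$ then reduces the constancy hypothesis to $\sum_{j=0}^{n-1}\binom{n-1}{j}(-1)^j \psi(j+1) = c'/n$ for $n\ge 2$, where $\psi(s) := 1/(1-\hat F(s))$ and $c'$ is $1/\mum{1}{0}$ times the common value of $\P(\hatG_{1:n}\ge 1)$. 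The exponential$(c')$ law already satisfies this with $\psi_0(s) = 1 + c'/s$; subtracting this from $\psi$ yields a sequence whose iterated forward differences $\Delta^N(\psi-\psi_0)(1) = 0$ vanish for all $N\ge 1$, so an induction on $N$ shows $\psi - \psi_0$ is constant on $\{1,2,\ldots\}$, and the asymptotic $\psi(k) \to 1$ as $k\to\infty$ fixes that constant at zero. Hence $\hat F(k) = c'/(k+c')$ for every positive integer $k$, matching exponential$(c')$; Weierstrass approximation on $[0,1]$ applied to the pushforward of $F$ under $y\mapsto e^{-y}$ then upgrades this to equality of distributions, so $H \sim$ beta$(1,c')$. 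The delicate step is this final binomial inversion; the other reverse implications reduce to well-chosen algebraic manipulations of the formulas in Corollary~\ref{crl:g}.
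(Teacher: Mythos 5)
Your proposal is correct, and while the forward direction and the step (2)$\Rightarrow$(1) essentially coincide with the paper's argument (a genuine geometric forces $\P(G_j\ge 1)$ to equal the continuation ratio, $h_j=\mum{0}{j}$, which with \eqref{hmudef} gives $\mum{0}{j}=\theta/(\theta+j)$ for $\theta=1/\mulog$ and Hausdorff determinacy finishes; your version in fact uses the correct relation $h_j=\mum{0}{j}$, whereas the paper's displayed line $h_j=1-\mum{0}{j}$ is a slip whose conclusion nonetheless agrees), your two remaining reverse implications take genuinely different routes. For (3)$\Rightarrow$(1) the paper invokes the record-chain characterization of Proposition \ref{prp:rec}, derives \eqref{pmmnn}--\eqref{mulogeq}, specializes to $m=1$ to get a recursion for $\E H^n$, and then needs Lemma \ref{lmm:thoH} to remove a multiplicative constant; you use only pairwise independence of the visit events $\{G_j>0\}$ and $\{G_{j+1}>0\}$, which via $\P(G_{j+1}>0\mid G_j>0)=p_{j,j+1}/(1-p_{j,j})=j\mum{1}{j}/(1-\mum{0}{j})$, together with \eqref{backfor}, \eqref{hmudef} and $\mum{1}{j}=\mum{0}{j}-\mum{0}{j+1}$, yields $h_{j+1}(j+1-h_j)=jh_j$; the substitution $k_j=1/h_j$ and telescoping do give $h_j=\theta/(j+\theta)$ with $\theta=h_1/(1-h_1)$, and $\mum{0}{j}\to 0$ then pins $\mulog=1/\theta$ — I verified this algebra, and it bypasses the appendix machinery entirely while showing that a weaker hypothesis (pairwise independence of consecutive visit indicators) already suffices; the only point worth adding is that $h_1=\E H/\mulog<1$ (since $-\log(1-u)>u$), so $\theta$ is finite, and that the claimed ``equivalence'' of independence of counts and of indicators is not needed, since you only use the trivial direction. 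For (4)$\Rightarrow$(1) your renewal-measure computation reproduces the paper's identity $\P(\hatG_{1:n}\ge1)=n\mum{1}{0}\sum_{k}\binom{n-1}{k}(-1)^k(1-\mum{0}{k+1})^{-1}$ (valid for $n\ge2$, as you note), but the identification step differs: the paper argues that recursion \eqref{rec} determines the moments $\mum{0}{n}$ one by one and that GEM satisfies it, while you subtract the exponential solution $\psi_0(s)=1+c'/s$, observe that all iterated forward differences of $\psi-\psi_0$ at $1$ vanish, and use the exact Newton formula $D(1+m)=\sum_{N=0}^m\binom{m}{N}\Delta^N D(1)$ plus $\psi(k)\to1$ to get $\hat F(k)=c'/(k+c')$ outright — the step you flag as delicate is in fact an exact finite identity requiring no convergence argument, so this gives a clean explicit alternative to the paper's inductive uniqueness claim. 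Both approaches are sound; yours trades the paper's auxiliary results (Proposition \ref{prp:rec}, Lemma \ref{lmm:thoH}) for elementary difference-equation and hitting-probability manipulations.
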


As discussed in \cite{MR2508790}  
for a general RAM, and in 
\cite{pitman-yakubovich-gem} 
for GEM$(0,\theta)$,
the limit sequences $\Nb$ and $\Gb$ may be encoded in various ways as counts in suitable point processes.
But those studies overlooked the simple basic structure of the Markov chain $\Qb$ exposed by the next two corollaries.
\begin{corollary}
\label{crl:branching}
For any distribution of $H$ on $(0,1)$, formula \eqref{backfor} defines the transition mechanism  of the Markov chain $\Qb$ 
 defined by a Galton-Watson branching process in a random environment, in which at each generation $k$ the offspring distribution of all individuals present is geometric$(1-H_k)$ on $\{1,2, \ldots \}$, with the $H_k$ picked i.i.d.\ from one generation to the next according to the distribution of $H$.  
Thus for each $k \ge 0$, by conditioning on $H_k$,
\begin{equation}
\label{negbingf}
\E \left(z ^{ Q_{k+1} - Q_k} \, \left| \, Q_k  = m \right.\right) = \E \left(  \frac{  1 - H }{ 1 - H z } \right)^m
\end{equation}
which when expanded in powers of $z$ is equivalent to \eqref{backfor}.
\end{corollary}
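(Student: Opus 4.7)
The plan is to verify the claim by matching probability generating functions: compute the pgf of $Q_{k+1}-Q_k$ under the proposed branching-in-random-environment dynamics, then expand it as a power series in $z$ and check that the coefficients agree with the formula \eqref{backfor}.

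First, I would record the pgf of a single geometric$(1-h)$ offspring variable $\xi$ on $\{1,2,\ldots\}$, namely
\begin{equation*}
\E z^\xi = \sum_{k=1}^\infty (1-h) h^{k-1} z^k = \frac{(1-h)z}{1-hz}.
\end{equation*}
In the branching process in random environment, given $Q_k=m$ and the environment variable $H_k=h$, the value $Q_{k+1}$ is the sum of $m$ i.i.d.\ copies of $\xi$, so that
\begin{equation*}
\E\bigl(z^{Q_{k+1}}\,\bigl|\,Q_k=m,H_k=h\bigr) = \left(\frac{(1-h)z}{1-hz}\right)^{\!m}.
\end{equation*}
Dividing by $z^m$ and integrating out $H_k$, which is independent of everything up to time $k$, yields exactly \eqref{negbingf}:
\begin{equation*}
\E\bigl(z^{Q_{k+1}-Q_k}\,\bigl|\,Q_k=m\bigr) = \E\left(\frac{1-H}{1-Hz}\right)^{\!m}.
\end{equation*}

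Next, I would expand this via the negative binomial series $(1-Hz)^{-m}=\sum_{j\ge0}\binom{m+j-1}{j}H^jz^j$, interchange sum and expectation by nonnegativity, and read off the coefficient of $z^{n-m}$:
\begin{equation*}
\P(Q_{k+1}=n\mid Q_k=m) = \binom{n-1}{n-m}\,\E\bigl[H^{n-m}(1-H)^m\bigr] = \binom{n-1}{m-1}\mum{n-m}{m},
\end{equation*}
using $\binom{n-1}{n-m}=\binom{n-1}{m-1}$. This matches \eqref{backfor} exactly, confirming that the branching mechanism reproduces the stated transition matrix. Since $H_0,H_1,\ldots$ are i.i.d.\ and each generation's reproduction depends only on the current environment variable, the resulting chain is automatically Markov with time-homogeneous transitions.

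There is no real obstacle here beyond keeping the geometric convention straight (supported on $\{1,2,\ldots\}$ rather than $\{0,1,\ldots\}$) and being careful that the exponent $m$ of $z$ contributed by the offspring pgf is precisely what converts $Q_{k+1}$ into the increment $Q_{k+1}-Q_k$; both are routine. The content of the corollary is essentially the observation that the negative binomial transition \eqref{backfor} is a mixture over $H$ of sums of $m$ geometric summands, which is the defining feature of a geometric-offspring Galton--Watson chain in a random environment.
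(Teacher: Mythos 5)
Your proposal is correct and is essentially the paper's argument written out in full: the paper's proof of this corollary is the one-line remark that the claim is ``clear by inspection of formula \eqref{backfor}'', and your computation (offspring pgf $\frac{(1-h)z}{1-hz}$, conditioning on the environment $H_k$, mixing over $H$, and expanding $\left(\frac{1-H}{1-Hz}\right)^m$ to recover $\binom{n-1}{m-1}\mum{n-m}{m}$) is exactly the verification the paper leaves implicit. Nothing is missing; the details you supply are the right ones.
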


This is a mixed negative binomial$(m,1-H)$ distribution for the increment
$Q_{k+1} - Q_k$ given $Q_k  = m $. With the shift by $m$, the
conditional distribution of $Q_{k+1}$ given $Q_k  = m $ 
is the mixed Pascal$(m, 1-H)$ distribution of the waiting time until the  $m$th success in a series of trials, which given $H$
are i.i.d.\ with success probability $1-H$ per trial.
In particular, if $H$ is assigned  a beta$(a,b)$ distribution on $(0,1)$, the distribution of 
$Q_{k+1} - Q_k$ given $Q_k  = m $ is known as a {\em beta mixed negative binomial} or {\em inverse Polya-Eggenberger}
distribution  \cite[\S 8.4]{MR2163227}. 
Some basic properties of the chain $\Qb$, such as the formula \eqref{meanq} below for $\E(Q_k)$ and asymptotic growth of $Q_k$ for large $k$, can be read from 
general theory of branching processes in a random environment. See for instance
\cite{MR0246380} 
\cite{MR1174427} 
\cite{MR2118578}. 
But it seems easiest to obtain results about $Q_k$ for large $k$ from the following simple construction:

\begin{corollary}
\label{crl:yule}
For $k = 0,1,2, \ldots$ let $Y_k:= \sum_{i = 1}^k \eps_i/i$ where the $\eps_i$ are independent standard exponential variables, so that $0= Y_0 < Y_1 < Y_2 < \cdots$ are the times of births in a {\em standard Yule process} 
$N_Y(t):= \sum_{k=0}^\infty \ind (Y_k \le t), t \ge 0$. 
Independent of this Yule process, let
$0 < S^*_0 < S^*_1 < \cdots$ be the times of arrivals 
in 
a stationary renewal counting process $N^*_S(t):= \sum_{i=0}^\infty \ind (S^*_i \le t), t \ge 0$,  
with $S^*_i - S^*_{i-1}$ for $i \ge 1$ a sequence of i.i.d.\ copies of $-\log ( 1 - H)$, and $S^*_0$
independent of the $S^*_i - S^*_{i-1}$ for $i \ge 1$, with the stationary delay 
distribution
\begin{equation}
\label{sodist}
\P( S^*_0 \in ds ) 
= \frac{ \BP( - \log(1-H)  > s )  } { \mulog  } \, ds
= \frac{ \BP(  H > 1 - e^{-s} )   } { \mulog  } \, ds
\qquad ( s > 0 ) 
\end{equation}
where $\mulog:= \E - \log(1-H)$ as in \eqref{eq:mulog}.
Then the limiting reversed tail count Markov chain $\Qb$ may be constructed as 
\begin{equation}
\label{yulerep}
Q_k := N_Y ( S^*_k)   \qquad (k = 0,1,2, \ldots ) 
\end{equation}
along with a corresponding representation of the cumulative sums of gaps $\Gb$:
\begin{equation}
\label{yulerepg}
G_1 + \cdots + G_j  = N^*_S ( Y_j)  \qquad (j = 1,2, \ldots ).
\end{equation}
\end{corollary}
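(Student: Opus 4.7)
The plan is to verify separately that (a) $Q_0 := N_Y(S_0^*)$ has the entrance law \eqref{entrancelaw}, (b) the sampled process $k \mapsto N_Y(S_k^*)$ is a Markov chain with the one-step transition matrix \eqref{backfor}, and then (c) deduce \eqref{yulerepg} from \eqref{yulerep} by rewriting the occupation counts $G_j$ in terms of the lengths of the intervals $[Y_{j-1},Y_j)$. The key identity, which drives everything, is that the Yule transition semigroup in time $s$ agrees with the law $p_{m,n}$ under the substitution $e^{-s} \leftrightarrow 1-H$.

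For step (a), I would use the standard fact that the Yule process started from a single ancestor at time $0$ satisfies $\P(N_Y(s)=m) = e^{-s}(1-e^{-s})^{m-1}$ for $m\ge 1$. Inserting the stationary delay density \eqref{sodist} and changing variables $u = 1-e^{-s}$, $du = e^{-s}\,ds$, I would obtain
\begin{equation*}
\P(Q_0 = m) \;=\; \frac{1}{\mulog}\int_0^\infty e^{-s}(1-e^{-s})^{m-1}\,\P(H > 1-e^{-s})\,ds \;=\; \frac{1}{\mulog}\int_0^1 u^{m-1}\,\P(H>u)\,du,
\end{equation*}
and a routine application of Fubini reduces the last integral to $\E H^m/m = \mum{m}{0}/m$, recovering \eqref{entrancelaw}.

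For step (b), the Yule process is itself a pure-birth Markov chain, so given $N_Y(S_k^*)=m$ and the subsequent waiting time $\tau := S_{k+1}^* - S_k^* = -\log(1-H_{k+1})$, the increment $N_Y(S_k^*+\tau) - m$ has the negative binomial law
\begin{equation*}
\P\bigl(N_Y(S_k^*+\tau) = n \,\big|\, N_Y(S_k^*)=m,\,\tau\bigr) \;=\; \binom{n-1}{m-1}\,e^{-m\tau}(1-e^{-\tau})^{n-m}.
\end{equation*}
Substituting $e^{-\tau}=1-H_{k+1}$ and $1-e^{-\tau}=H_{k+1}$ and then averaging over $H_{k+1}$, which is independent of everything that has happened by time $S_k^*$, gives exactly $\binom{n-1}{m-1}\mum{n-m}{m} = p_{m,n}$. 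Since the waiting times $S_{k+1}^*-S_k^*$ are i.i.d.\ and independent of the Yule process, this both verifies the transition law \eqref{backfor} and supplies the Markov property of the sampled chain.

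For step (c), I would use that $N_Y(t)=j$ exactly on the interval $[Y_{j-1},Y_j)$, so the occupation count rewrites as $G_j = \#\{k\ge 0: S_k^* \in [Y_{j-1},Y_j)\}$, and telescoping yields
\begin{equation*}
G_1 + \cdots + G_j \;=\; \#\{k\ge 0 : 0 \le S_k^* < Y_j\} \;=\; N_S^*(Y_j)
\end{equation*}
almost surely, using that $\P(S_k^*=Y_j)=0$ by independence. The calculation is largely bookkeeping; the one step that requires genuine thought is the matching in (b), where the choice of the stationary delay in \eqref{sodist}—and in particular the factor $1/\mulog$—is forced precisely to make the mixture over the Yule semigroup reproduce both the entrance law \eqref{entrancelaw} and stationarity of the renewal increments. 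Once this matching is in hand, Theorem \ref{thm:main} identifies the constructed process with $\Qb$ in distribution, and \eqref{yulerep}--\eqref{yulerepg} follow.
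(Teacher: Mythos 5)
Your proposal is correct and takes essentially the same route as the paper's own proof, whose core is exactly your step (b): the comparison of the mixed negative binomial transition matrix \eqref{backfor} of $\Qb$ (Corollary \ref{crl:branching}) with the Yule semigroup \eqref{negbingft} of Proposition \ref{prp:yule} under the substitution $e^{-\tau}=1-H$. Your entrance-law computation from the stationary delay \eqref{sodist} and the telescoping of occupation counts giving \eqref{yulerepg} simply make explicit the checks the paper defers to its ``Further Remarks'' and to the $N$--$G$ interleaving duality described in the introduction.
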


These results may be summarized less formally as follows.
Regard the sample from the RAM as a pattern of $n$ balls in an infinite row of boxes labeled by positive
integers, from left to right. Read the pattern from right to left, starting with the rightmost occupied box, 
as a list of $n$ stars and some number of bars, with a star for each ball, and each bar representing a barrier between boxes.
The pattern starts with $N_{0:n} \ge 1$ stars followed by a bar, then $N_{1:n} \ge 0$ stars, followed by a bar, and so on.
Then $G_{j:n}$ for $1 \le j < n$ is the number of bars between the $j$th and $(j+1)$th star.

As $n \to \infty$, the distribution of each initial finite segment of this pattern of stars and bars converges to that of an initial segment of an infinite pattern of stars and bars. The limit pattern
starts with a star and contains both infinitely many stars and infinitely many bars almost surely.
This limit pattern is generated by the superposition of two independent simple point processes on $[0,\infty)$, 
a Yule birth process of stars at times $0 = Y_0 < Y_1 < \cdots$, and  
a stationary renewal process of bars at times $0 < S^*_0 <  S^*_1 <  \cdots$, 
by listing the stars and bars in their order of appearance, always starting with a star from initial birth of the Yule process at time $0$.

Thus
the two limit sequences $\Nb$ and $\Gb$ provide dual encodings of the interleaving of points of the two simple point processes $S^*$ and $Y$ on $[0,\infty)$:
\begin{itemize}
\item $\Gb$ counts $S^*$ renewals between consecutive Yule birth times;
\item $\Nb$ counts Yule birth times between consecutive $S^*$ renewal points.
\end{itemize}
For instance, if the interleaving  of birth times and renewal points in increasing order is
$$
(0 = \T_0 , \T_1 ,\St^*_0 , \T_2 , \St^*_1 , \St^*_2 , \St^*_3 , \T_3 , \T_4 , \T_5 , \T_6 , \St^*_4 , \T_7 , \T_8 , \T_9 ,  \cdots )
$$
which information is fully encoded by the string 
$YYS{\,}YS{\hspace{1.2pt}S\hspace{1.2pt}}S{\,}YYYYS{\,}YYY \cdots $,
the two sequences describing this interleaving are
$$
(G_1 , G_2, \ldots) = (  0,1,3,0,0,0,1,0,0, \ldots )
\mbox{ and } 
(N_0, N_1, \ldots) = ( 2,1,0,0,4,3+ , \ldots ) 
$$
where the $3+$ indicates a value which is at least $3$, but which cannot be determined exactly without examining more points of the interleaving.
This is a limiting form of the classical ``stars and bars'' duality of combinatorics for enumerating various kinds of integer compositions.  

In the usual representation  of sampling from a RAM, by the locations of $n$ uniform sample  points in the unit interval, relative to a sequence of bins of lengths $P_1, P_2, \ldots$,
the two point processes $N_Y$ and $N^*_S$ arise from an asymptotic analysis on a logarithmic scale.
The structure of the sample points on that scale, coming down from the maximum, is that of the birth times of the Yule process,
with $Y_0 = 0$ corresponding to the maximum value in the sample of $n$ independent variables.
The two point processes which play dual roles in this asymptotic description are
\begin{align}
\label{statren} N^*_S(t) &:= \sum_{i = 0 }^\infty \ind ( S^*_i \le t )  \mbox{  has stationary increments with } \BE N^*_S(t) = t/ \mulog ; \\
\label{yulep} N_{Y}(t)&:= \sum_{i = 0 }^\infty \ind( Y_i \le t )  \mbox{ is a Markovian branching process with } \BE N_{Y}(t) = e^{t}. 
\end{align}
The GEM$(0,\theta)$ model is by far the simplest, because  of the result of Ignatov \cite{MR645134}:  
\begin{equation}
\label{ignatov}
\mbox{{\em  in the \textup{GEM}$(0,\theta)$ model $(N^*_S(t), t \ge 0 )$ is a Poisson process with constant rate $\theta$}.}
\end{equation}
See \cite{pitman-yakubovich-gem} 
for further analysis in this case.

\begin{corollary}
In the setting of Theorem \ref{thm:main}
for all $j \ge 0$
\begin{align}
\label{meanq} \lim_{n \to \infty} \BE \hatQ_{j:n} &=  \BE Q_j =  \frac{ (\mum{0}{-1} - 1) \, \mu^j_{0,-1} }{\mulog} 
\end{align}
where $\mu_{0,-1}:= \E (1- H)^{-1}$, along with a corresponding result for $\BE N_j$ by differencing, with $\BE Q_j = \BE N_j = \infty $ for all $j$ if $\mum{0}{-1}:= \BE ( 1 - H)^{-1}  = \infty$.
For the GEM$(0,\theta)$ model, $\mum{0}{-1} \eqth 1/(\theta - 1)_+$, with value $\infty$ iff $0 < \theta \le  1$.
Also,  in the setting of Theorem \ref{thm:main},
the Markov chain $\Qb$ is subject to the exponential growth
\begin{align}
\label{borel}
\lim_{k \to \infty} Q_k^{1/k}  = \exp(\mulog)  \mbox{ almost surely.}
\end{align}
\end{corollary}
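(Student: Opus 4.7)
I would split the corollary into its two claims: the mean formula \eqref{meanq}, handled via the branching representation of Corollary \ref{crl:branching}, and the almost-sure growth \eqref{borel}, handled via the Yule--renewal representation of Corollary \ref{crl:yule}.

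\emph{Mean formula.} By Corollary \ref{crl:branching}, conditionally on $Q_k = m$ and the environment $H_k$, the variable $Q_{k+1}$ is a sum of $m$ i.i.d.\ geometric$(1-H_k)$ variables on $\{1,2,\ldots\}$, so it has conditional mean $m/(1-H_k)$. Averaging over $H_k$ gives
\begin{equation*}
\E(Q_{k+1} \giv Q_k) = \mum{0}{-1} \, Q_k,
\end{equation*}
and iterating yields $\E Q_k = \mum{0}{-1}^k \, \E Q_0$. Plugging in the entrance law \eqref{entrancelaw} and using $\sum_{m\ge 1} H^m = H/(1-H)$ together with $\E[H/(1-H)] = \mum{0}{-1} - 1$ (valid since $H < 1$ a.s.), I get
\begin{equation*}
\E Q_0 = \sum_{m=1}^\infty m \cdot \frac{\mum{m}{0}}{m \mulog} = \frac{\mum{0}{-1} - 1}{\mulog},
\end{equation*}
which combines with the iteration to give \eqref{meanq}. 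When $\mum{0}{-1} = \infty$, the same identities give $\E Q_0 = \infty$ and hence $\E Q_j = \infty$ for every $j$. The GEM$(0,\theta)$ evaluation follows from \eqref{muij} with the convention $(\theta)_{-1} = 1/(\theta-1)$. To upgrade the distributional convergence \eqref{nglims} to convergence of means $\E \hatQ_{j:n} \to \E Q_j$, I would check uniform integrability of $\{\hatQ_{j:n}\}_{n\ge j}$, for instance by bounding $\E \hatQ_{j:n}^{1+\delta}$ uniformly in $n$ via the gap decomposition \eqref{ghatfromq} and the finite higher moments of the $\hatG_{i:n}$ supplied by Corollary \ref{crl:g}.

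\emph{Exponential growth.} Corollary \ref{crl:yule} realizes $Q_k = N_Y(S^*_k)$. The spacings $S^*_k - S^*_{k-1}$, $k \ge 1$, are i.i.d.\ copies of $-\log(1-H)$ with mean $\mulog$, so the strong law gives $S^*_k/k \to \mulog$ a.s., and in particular $S^*_k \to \infty$. For the Yule times $Y_k = \sum_{i=1}^k \eps_i/i$, the partial sums $\sum_{i=1}^k (\eps_i - 1)/i$ form an $L^2$-bounded sum of independent mean-zero terms (variance bounded by $\sum 1/i^2$), hence converge a.s.; therefore $Y_k - \sum_{i=1}^k 1/i$ converges a.s., $Y_k/\log k \to 1$ a.s., and consequently $t^{-1}\log N_Y(t) \to 1$ a.s.\ as $t \to \infty$. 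Composing these limits,
\begin{equation*}
\frac{\log Q_k}{k} \;=\; \frac{\log N_Y(S^*_k)}{S^*_k} \cdot \frac{S^*_k}{k} \;\longrightarrow\; 1 \cdot \mulog \;=\; \mulog \quad \text{a.s.},
\end{equation*}
which is exactly \eqref{borel}.

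\emph{Main obstacle.} The only delicate step is the uniform integrability needed to promote weak convergence of $\hatQ_{j:n}$ to convergence of first moments; everything else is a one-line computation with the branching mean function or a direct application of the strong law of large numbers to the two independent components in the representation of Corollary \ref{crl:yule}.
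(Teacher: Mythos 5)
Your plan follows the paper's own (very brief) argument: the mean formula \eqref{meanq} is read off the branching description of Corollary \ref{crl:branching} together with the entrance law \eqref{entrancelaw} (giving $\E Q_0=(\mum{0}{-1}-1)/\mulog$ and $\E Q_{k+1}=\mum{0}{-1}\,\E Q_k$), and \eqref{borel} is obtained from the representation $Q_k=N_Y(S^*_k)$ of Corollary \ref{crl:yule} by the strong law for the renewal epochs combined with the a.s.\ growth $e^{-t}N_Y(t)\to\hateps_1>0$. These computations are correct and are exactly what the paper intends. (A side remark: evaluating $\mum{0}{-1}$ from \eqref{muij} as you propose gives $\theta/(\theta-1)$ for $\theta>1$; the essential point, divergence iff $0<\theta\le 1$, is unaffected.)

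The one step of your sketch that would not go through as written is the uniform integrability argument. You cannot bound $\E\hatQ_{j:n}^{1+\delta}$ ``via the gap decomposition \eqref{ghatfromq} and the finite higher moments of the $\hatG_{i:n}$'': relation \eqref{ghatfromq} expresses gaps in terms of $\hatQ_{\bullet:n}$, not conversely, and the upper tail of $\hatQ_{j:n}$ is governed by the event that many consecutive gaps are simultaneously \emph{small}, since $\P(\hatQ_{j:n}\ge m)=\P(\hatG_{1:n}+\cdots+\hatG_{m-1:n}\le j)$; for instance $\hatQ_{0:n}\ge m$ holds exactly when the top $m-1$ gaps all vanish. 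Finiteness of (upper) moments of individual gaps gives no control over such small-value probabilities, so this route stalls. To justify $\lim_n\E\hatQ_{j:n}=\E Q_j$ one must bound these probabilities, or bound the reversed counts directly by renewal-theoretic estimates, which is what the paper does by invoking the treatment of the limiting counts $K_j$ in \cite{MR2508790}; note also that when $\mum{0}{-1}=\infty$ no uniform integrability is needed, since $\liminf_n\E\hatQ_{j:n}\ge\E Q_j=\infty$ follows from \eqref{nglims} and Fatou. Apart from this local repair, your proposal matches the paper's proof.
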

The formula  \eqref{meanq} for $\E Q_j$ is read from the branching process description of $\Qb$,
and the convergence of expectations is easily justified, much as in  
\cite{MR2508790} 
for the limiting counts $K_j=\sum_{i=0}^\infty \ind (N_i=j)$.
The limit formula \eqref{borel} also follows easily from Corollary \ref{crl:yule}.
The particular case of \eqref{borel} for the Markov chain $\Qb$ associated with the GEM$(0,1)$ model with $\mulog = 1$ was first encountered by 
Borel \cite{MR0023007} 
and L\'evy \cite{MR0023008} 
in connection with the representation of a number between 0 and 1 in the so-called Engel series, see also 
\cite{MR0102496} for detailed proofs. 

\section{Preliminaries}

\subsection{Stars and bars}
\label{sec:starsbars}
The notion of a random sample from a random discrete distribution admits a variety of possible interpretations. 
See for instance 
\cite{pitman-yakubovich-gem} 
for recent review.
But the balls-in-boxes metaphor from  recent studies of the Bernoulli sieve \cite{MR2735350} 
seems to provide the most intuitive language for the present analysis.
Regard the sample  $X_1, \ldots, X_n$ as an allocation of $n$ balls labeled by $i = 1,2, \ldots, n $ into an unlimited number of boxes 
labeled by $b \in \{1,2, \ldots\}$. 
So $X_i$ is the label of the box into which ball $i$ is thrown. Given $\Pbul$ the $X_i$ are independent allocations with 
$\BP(X_i = b \giv \Pbul) = P_b$.  The count $N_{b:n}$ is the number of balls thrown into box $b$, the sample maximum $X_{n:n} = \max\{b: N_b >0\}$ 
is the label of the rightmost occupied box, and so on. 

Represent a configuration of balls in boxes by its classical combinatorial encoding as a list of {\em stars and bars} \cite[p.~15]{MR2868112}.  
For instance, the configurations of values and their multiplicities in two possible samples of size  $8$ from positive integers may be indicated as
\begin{equation}
N_{\bullet:8} = ( 2 , 0, 1, 2, 0, 3)   \qquad \mbox{ or }\qquad  ( 0,0,0, 2, 0, 3, 0, 2, 1)  
\end{equation}
which correspond to the stars and bars sequences
\begin{equation}
\label{starsbars}
{{}*{}} {{}*{}} |\,|{{}*{}} | {{}*{}}{{}*{}} |\, | {{}*{}}{{}*{}}{{}*{}}  |\, |\, |\, |\, | \cdots \qquad
\mbox{ or } \qquad
|\,|\,|{{}*{}}{{}*{}} |\, |{{}*{}}{{}*{}}{{}*{}} |\, | {{}*{}}{{}*{}} | {{}*{}} |\, |\, | \cdots \,.
\end{equation}
Here $N_{b:8}$ is the number of stars between the $(b-1)$th and $b$th bar, reading from left to right, and the sequence is terminated for convenience at the rightmost occupied box $b= M_8$,
 beyond which all the counts are $0$.
Each star represents a ball, and each bar a {\em barrier} between boxes.
Box 1 is the container to the left of the first bar, box 2 is between the first bar and the second bar, and so on. The $\cdots$ represent an unlimited number of additional boxes, all of which are 
empty in these configurations with only $8$ balls.
These configurations are just as well encoded by their order statistics, or by the gaps between their order statistics. 
The sequence of gaps between order statistics $\hatG_{\bullet:8}$,
defined from the top down as in \eqref{gaps},
counts numbers of bars between stars (barriers between balls), from right to left, starting with the rightmost ball:
\begin{equation}
\hatG_{\bullet:8} =  (0,0,2,0,1,2,0,0) \mbox{ or } (1,0,2,0,0,2,0,3) .
\end{equation}
These two configurations illustrate for $n = 8$ the cases
\begin{equation}
(N_{1:n} > 0 ) \Leftrightarrow (\hatG_{n:n} = 0 ) \mbox{ or } (N_{1:n} = 0 ) \Leftrightarrow (\hatG_{n:n} > 0 )  .
\end{equation}
These relations hold no matter what the sample, by 
the identity of events $(N_{1:n} >0) = (X_{1:n} = 1)$ and
the definition of $\hatG_{n:n}:= X_{1:n} - 1$.

For sampling from any random discrete distribution, the distribution of $N_{\bullet:n}$ is just a mixture of infinitinomial$(\Pbul)$ distributions, as treated in 
\cite{MR0216548} 
\cite{MR2318403} 
\cite{MR2412154} 
for a fixed sequence of probability parameters $\Pbul$ on the positive integers, with the parameter sequence $\Pbul$ assigned some probability distribution. Thus
for every sequence of integers $(n_1, \ldots, n_k)$ with $n_i \ge 0$ for $1 \le i < k$ and $n_k >0$ and $\sum_{i=1}^n n_i = n$,
\begin{align}
\label{distcounts} \BP [ N_{\bullet:n} = (n_1,\ldots, n_k, 0,0,\ldots) ] &= {n \choose n_1, \ldots, n_k } \BE \prod_{i=1}^k P_i ^{n_i}  \\
\label{sashaform}                   &= {n \choose n_1, \ldots, n_k }   \E \prod_{i=1}^ k H_i ^{n_i} ( 1 - H_i )^{n_{i+1} + \cdots + n_k } \\
\label{ramform} &= {n \choose n_1, \ldots, n_k }   \prod_{i=1}^ k \mum{n_i}{n_{i+1} + \cdots + n_k} 
\end{align}
where in the last expression it is assumed that $\Pbul$ follows a RAM with i.i.d.\ hazards $H_k \ed H$ with moments $\mum{i}{j}:= \BE[ H^i (1-H)^j]$. This expression for a RAM
corrects a formula of 
Gnedin, Iksanov and Roesler \cite[(3)]{MR2508790}, 
in which the multinomial coefficient should be omitted, and the order of indices reversed.
Formula  \eqref{distcounts} also gives 
\begin{equation}
\label{gapsdist} \BP [ \hatG_{\bullet:n} = (g_1, \ldots, g_n) ] \mbox{ for } (g_1, \ldots g_n) \leftrightarrow (n_1,\ldots, n_k, 0,0,\ldots) 
\end{equation}
via the bijection (\ref{qfromn})--(\ref{ghatfromq}) mediated by the stars and bars representation between  the
$n$-tuples of non-negative integers $(g_1, \ldots, g_n)$ which are the possible values of the gap sequence $\hatG_{\bullet:n}$ 
and the weak compositions $(n_1,\ldots, n_k, 0,0,\ldots)$ of $n$ with $n_j \ge 0$ and $\sum_j n_j = n$ which are possible values of the count sequence $N_{\bullet:n}$.
In principle, formula \eqref{distcounts} specifies the distribution of the gaps  $\hatG_{\bullet:n}$ for a sample of size $n$ from any random discrete distribution $\Pbul$, with 
some simplification for a RAM.  
The distribution of the gap sequence $\hatG_{\bullet:n}$ derived from  GEM$(0,\theta)$ is especially simple, 
as indicated in \eqref{gapstheta}, and far simpler than its logically equivalent description in terms of the
counts sequence $N_{\bullet:n}$, via the stars and bars bijection. 

\subsection{Point processes}
\label{sec:sampingram}

For a random discrete distribution $\Pbul$ on the positive integers, assumed to be of the stick-breaking form \eqref{stickbreak} for some
(possibly dependent) random factors $H_i \in [0,1]$, let
\begin{equation}
\label{breakpoints}
\FY_0:= 0 \mbox{ and } \FY_j:= \sum_{i=1}^j P_i = 1 - \prod_{i=1}^j (1 - H_i) \mbox{ for } j = 1,2, \ldots.
\end{equation}
In the stick-breaking interpretation, the $\FY_k \in [0,1]$ are called the {\em break points}. 
For a random sample $X_1,X_2, \ldots$ from $\Pbul$,
\begin{equation}
\label{breakcdf}
\BP( X_i \le j \giv \Pbul ) =  \FY_j\,,
\end{equation}
so the sequence $\FY_\bullet=(F_0,F_1,F_2,\dots)$ gives the evaluations at $j = 0,1, 2, \ldots$ of the random discrete cumulative distribution function derived from $\Pbul$.
Suppose that almost surely
\begin{equation}
\label{proper}
P_j > 0 \mbox{ and } \sum_j P_j = 1, \mbox{ or,  equivalently }  0 < \FY_1 < \FY_2 < \cdots \uparrow 1 .
\end{equation}
Following the method used by 
Ignatov \cite{MR645134} 
for GEM$(0,\theta)$,  and further developed 
by Gnedin and coauthors 
\cite{MR2508790} 
for various other models of random discrete distributions,
the  counts of break points $\FY_j$, not including either end of the interval $[0,1]$,
 define a simple point process $N_\FY$, with 
$$
N_\FY(a,b]:= \sum_{k=1}^\infty \ind( a < \FY_k  \le b )   \qquad ( 0 \le a < b < 1 )
$$
the number of break points in $(a,b]$.
It is convenient to make the change of variable from $[0,1)$ to $[0,\infty)$ by the map $u\mapsto x = - \log(1-u)$. 
This is the inverse of the cumulative distribution function $x \mapsto 1 - e^{-x}$ of a standard exponential variable 
$\eps := - \log (1-U)$ for $U$ uniform on $(0,1)$.
Let  $S_0:= 0$ and
$$
S_j:= - \log ( 1 - \FY_j) = \sum_{i=1}^j - \log(1 - H_i) \qquad ( j = 1,2, \ldots).
$$
Regard these images $S_j$ of the break points $\FY_j$ as the points of an associated point process $N_S$ on $(0,\infty)$, 
$$
N_S(s,t]:= \sum_{j=1}^\infty \ind( s < S_j \le t ) =  N_\FY(1 - e^{-s}, 1 - e^{-t}]   \qquad (0 \le s < t < \infty )
$$
and consider the {\em spacings} $S_j - S_{j-1}$ between these points. Note that the assumption \eqref{proper} translates 
into $0 = S_0 < S_1 < S_2 < \cdots \uparrow \infty$.
The following Lemma is obvious from these definitions.
\begin{lemma} $~$
\label{lemma:ram}
\begin{itemize}
\item [(i)]
In the representation of $\Pbul$ as a residual allocation model \eqref{stickbreak} or \eqref{breakpoints}, the factors $H_i$ are independent
iff the spacings $S_j - S_{j-1}$, $j = 1,2, \ldots$ are independent.
\item [(ii)]
The $H_i$ are i.i.d.\ iff the $S_j - S_{j-1}$ are i.i.d., in which case the $S_j$ are the points of a renewal process on $(0,\infty)$, with spacings 
distributed according to $S_1 \ed - \log( 1 - H_1)$.
\end{itemize}
\end{lemma}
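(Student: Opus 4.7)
The plan is to observe that the lemma is essentially a tautology once the explicit relation between $H_i$ and the spacings is written down. From the definition $S_j := \sum_{i=1}^j -\log(1-H_i)$ and $S_0 := 0$, telescoping gives
\begin{equation*}
S_j - S_{j-1} = -\log(1 - H_j), \qquad j = 1, 2, \ldots.
\end{equation*}
So each spacing is a deterministic function of the corresponding $H_j$ alone, and conversely $H_j = 1 - \exp(-(S_j - S_{j-1}))$ recovers $H_j$ from the spacing.

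For part (i), I would invoke the fact that the map $\varphi : h \mapsto -\log(1-h)$ is a measurable bijection from $(0,1)$ onto $(0,\infty)$ with measurable inverse $\varphi^{-1}(x) = 1 - e^{-x}$. Since the family $(S_j - S_{j-1})_{j\ge 1}$ is obtained by applying $\varphi$ coordinatewise to the family $(H_j)_{j\ge 1}$, and the family $(H_j)_{j\ge 1}$ is obtained by applying $\varphi^{-1}$ coordinatewise to $(S_j - S_{j-1})_{j\ge 1}$, the joint $\sigma$-algebras generated by the two families agree. In particular, independence of the $H_j$ is equivalent to independence of the spacings.

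For part (ii), the same bijection $\varphi$ shows that the $H_j$ are identically distributed iff the spacings are, because pushforward under a bijection preserves equality of laws, and combined with part (i) this gives the first assertion. Under assumption \eqref{proper} the spacings are strictly positive almost surely, and once they are i.i.d.\ copies of $-\log(1-H_1)$, the sequence of partial sums $S_j = \sum_{i=1}^j (S_i - S_{i-1})$ with $S_0 = 0$ matches the very definition of a renewal process on $(0,\infty)$ with interarrival law $\mathcal L(-\log(1-H_1))$.

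Because both implications rest only on the coordinatewise invertible change of variable $h \leftrightarrow -\log(1-h)$, no real obstacle arises; the only point that might deserve a line of justification is the measurability of $\varphi$ and $\varphi^{-1}$, which is immediate from continuity. This is presumably why the authors label the statement \emph{obvious from these definitions}.
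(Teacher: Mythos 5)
Your proof is correct and follows exactly the route the paper intends: the paper simply declares the lemma ``obvious from these definitions,'' and your write-up just makes explicit the underlying observation that $S_j - S_{j-1} = -\log(1-H_j)$ is a coordinatewise measurable bijection of $H_j$, so independence and identical distribution transfer both ways and the i.i.d.\ positive spacings make $(S_j)$ a renewal process by definition. No gaps; nothing further is needed.
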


The key simplifying property of the GEM$(0,\theta)$ model is Ignatov's result \cite{MR645134} 
that its associated renewal process is a homogeneous Poisson process with rate $\theta$.
To  analyse the process of sampling from a random discrete distribution $\Pbul$, the sample $X_1, X_2, \ldots$ may be constructed in the
usual way from the discrete random cumulative values $\FY_\bullet$ as 
\begin{equation}
\label{xidef}
X_i := 1 + N_\FY(0,U_i]:= 1 + \sum_{j=1}^\infty \ind(\FY_j \le U_i )  \qquad ( i = 1,2, \ldots)
\end{equation}
where $U_1, U_2, \ldots$ is a  sequence of i.i.d.\ uniform $(0,1)$ variables, independent of the break points $\FY_\bullet$ counted by $N_\FY$.
Here, the parts of the stick between break points are labeled from left to right by $1,2, \ldots$. Then $X_i$ is the label of the part of the stick containing $U_i$.
That label is conveniently evaluated in \eqref{xidef} as $1$ plus the number of break points to the left of $U_i$. 
The next Lemma too follows immediately from these definitions.

\begin{lemma}
\label{lmm:pp}
Whatever the random discrete distribution $\Pbul$ subject to \eqref{proper}, let the sample $(X_1, \ldots, X_n)$ be constructed by \eqref{xidef} 
from the break points $\FY_\bullet$ and an i.i.d.\ uniform sample  $(U_1, \ldots, U_n)$,  with $S_j:= - \log (1-\FY_j)$ the transformed
break points, and $\eps_i:= - \log(1- U_i)$ the transformed uniforms, which are an i.i.d.\ standard exponential sample. Then
the order statistics of the $X$-sample, the $U$-sample and the $\eps$-sample are related by
\begin{equation}
\label{sysgaps}
X_{i:n} = 1 + N_\FY(0,U_{i:n}] = 1 + N_S(0,\eps_{i:n}]\qquad\quad (1\le i\le n), 
\end{equation}
while the gaps of the $X$-sample in descending order $\hatG_{i:n} := X_{n-i+1:n} - X_{n-i,n}$ may be recovered from counts in either of the point processes $N_\FY$ or $N_S$ as
\begin{equation}
\label{gue}
\hatG_{i:n} = N_\FY( U_{n-i:n}, U_{n-i +1:n} ] =  N_S( \eps_{n-i:n}, \eps_{n-i +1:n} ] \qquad\quad (1\le i\le n).
\end{equation}
\end{lemma}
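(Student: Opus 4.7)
The plan is to read off both identities directly from the construction \eqref{xidef}, exploiting that everything in sight is a monotone function of the uniforms. The proof should be short and require no probabilistic input beyond the stated definitions; in particular, nothing about the law of $\Pbul$ other than \eqref{proper} is used.

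First I would argue that for any fixed realization of $\FY_\bullet$, the map $u \mapsto 1 + N_\FY(0,u]$ is non-decreasing in $u \in [0,1)$, since it counts break points in an expanding interval. Hence \eqref{xidef} exhibits each $X_i$ as a non-decreasing function of $U_i$, and monotone maps preserve order statistics, giving the first equality in \eqref{sysgaps}:
\[
X_{i:n} = 1 + N_\FY(0, U_{i:n}].
\]
For the second equality, I would use that $u \mapsto -\log(1-u)$ is itself a strictly increasing bijection of $[0,1)$ onto $[0,\infty)$ that sends $\FY_j \mapsto S_j$ and $U_i \mapsto \eps_i$; it preserves order statistics, so $\eps_{i:n} = -\log(1 - U_{i:n})$, and pushing the counting measure forward under this bijection yields $N_\FY(0,U_{i:n}] = N_S(0,\eps_{i:n}]$ directly from the definition of $N_S$ in terms of $N_\FY$.

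For \eqref{gue}, I would simply subtract consecutive instances of \eqref{sysgaps} and use additivity of the counting measures on disjoint intervals:
\[
\hatG_{i:n} = X_{n-i+1:n} - X_{n-i:n} = N_\FY(U_{n-i:n}, U_{n-i+1:n}] = N_S(\eps_{n-i:n}, \eps_{n-i+1:n}].
\]
The only point that takes a moment of care is the boundary case $i = n$: the convention $X_{0:n} := 1$ in the definition of $\hatG_{n:n}$ must be matched by the conventions $U_{0:n} := 0$ and $\eps_{0:n} := 0$, under which the interval $(U_{0:n}, U_{1:n}]$ reduces to $(0,U_{1:n}]$ and the identity continues to hold by the $i=1$ case of \eqref{sysgaps}. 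There is no real obstacle here; the whole lemma is a bookkeeping exercise in pushing \eqref{xidef} through order statistics, and the only thing to track carefully is that boundary convention.
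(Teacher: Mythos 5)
Your argument is correct and is exactly the intended one: the paper gives no written proof, asserting that the lemma ``follows immediately from these definitions,'' and your elaboration (monotonicity of $u \mapsto 1+N_\FY(0,u]$, preservation of order statistics under the increasing map $u\mapsto-\log(1-u)$, additivity of the counting measures, and the boundary convention for $i=n$) spells out precisely that immediate deduction. Nothing is missing.
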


\subsection{The Yule process}

The following proposition presents a number of known characterizations of the Yule process.

\begin{proposition}
\label{prp:yule}
Let $Y_0:= 0 < Y_1 < Y_2 < \cdots$ be the points of a simple point process on $[0,\infty)$, with an initial
point at $0$, and associated counting process
$$
N_Y(t):= \sum_{k=0}^\infty \ind ( Y_k \le t ) \qquad ( t \ge 0).
$$
The following conditions are equivalent:
\begin{itemize}
\item The counting process $N_Y$ is a {\em standard Yule process}, that is a pure birth Markov process, with state space $\{1,2, \ldots\}$,
initial state $N_Y(0) = 1$, right continuous step function paths,  constant transition rate $k$ for transitions from  $k$ to $k+1$, and all other
transition rates $0$.
\item The counting process $N_Y$ is a Markov process with stationary transition probabilities, such that $N_Y(0) = 1$ and for $s, t \ge 0$, for each $m = 1,2, \ldots$ the
conditional distribution of $N_Y(s+t) - N_Y(s)$ given $N_Y(s) = m$ is the negative binomial$(m, e^{-t})$ distribution with generating function 
\begin{equation}
\label{negbingft}
\E \left( z ^{ N_Y(s+t) - N_Y(s) } \, \left| \, N_Y(s)   = m \right.\right) = \left( \frac{  e^{-t} }{ 1 - (1 - e^{-t})  z } \right)^m .
\end{equation}
\item There is the representation
\begin{equation}
\label{exprep}
Y_k = \sum_{i=1} ^k \frac{ \eps_{i} }{i} \qquad ( k \ge 0)
\end{equation}
for a sequence of i.i.d.\ standard exponential variables $\eps_i$, with $Y_0 = 0$.
\item 
For each fixed $k$ there is the equality of joint laws
\begin{equation}
\label{order-stats}
(\T_j, 0 \le j \le k) \ed (\eps_{n:n} - \eps_{n-j:n}, 0 \le j \le k ) \qquad \mbox{ for every } n \ge k,
\end{equation}
where $\eps_{0:n}:=0<\eps_{1:n} < \cdots <\eps_{n:n}$ is the sequence of order statistics of i.i.d.\ standard exponential variables $(\eps_i, 1 \le i \le n)$.
\item There is the representation
\begin{equation}
\label{kendallrep}
N_Y(t) = 1 + N_\gamma(  ( e^t - 1 ) \hateps_1  )
\end{equation}
where $N_\gamma(v):= \sum_{i = 1}^\infty \ind (\gamma_i \le v )$ is a rate $1$ Poisson process with arrival
times $\gamma_k = \sum_{i = 1}^k \tileps_i$ for a sequence of i.i.d.\  standard exponential variables $\tileps_i$,
and $\hateps_1$ a further standard exponential variable, independent of all the $\tileps_i$,  which may be identified as
\begin{equation}
\label{explimit}
\hateps_1 = \lim_{t \to \infty} e^{-t} N_Y(t) = \lim_{k \to \infty} k e^{- Y_k} \mbox{ almost surely}
\end{equation}
while
\begin{equation}
\label{gamvars}
\gamma_k = ( e^{Y_k} - 1 ) \hateps_1 \qquad ( k = 1,2 , \ldots ) .
\end{equation}
\item
There is the representation
\begin{equation}
\label{logrep}
Y_k = \log ( \hatgam_{k+1} /\hatgam_1 ) \qquad ( k = 0,1,2 , \ldots ) .
\end{equation}
where $\hatgam_{k+1} = \sum_{i = 1}^{k+1} \hateps_i$ for a sequence of i.i.d.\  standard exponential variables $\hateps_i$, 
which may be identified as $\hatgam_1 = \hateps_1$ as in \eqref{explimit} for $k = 0$, and $\hatgam_{k+1} = \hateps_1 + \gamma_k$ as in \eqref{gamvars} for $k \ge 1$.
\end{itemize}
\end{proposition}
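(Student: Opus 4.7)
The plan is to treat the six conditions as a cycle of classical equivalences organized around the Markov pure-birth definition (1) as the hub. All the ingredients are standard; the work is mostly in organizing them and in verifying joint (rather than merely marginal) distributional identities.

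For the routine steps: (1) $\Leftrightarrow$ (2) is a forward-Kolmogorov calculation. Solving the triangular system $p_{m,m+j}'(t) = (m+j-1)p_{m,m+j-1}(t) - (m+j)p_{m,m+j}(t)$ with $p_{m,m}(0) = 1$ yields $p_{m,m+j}(t) = \binom{m+j-1}{j}e^{-mt}(1-e^{-t})^j$, whose probability generating function is \eqref{negbingft}; conversely the birth rates are recovered by differentiating \eqref{negbingft} at $t = 0$. (1) $\Leftrightarrow$ (3) is the standard holding-time description of a Markov jump chain: in a pure-birth chain with rate $k$ at state $k$, the sojourn times $Y_k - Y_{k-1}$ are independent exponentials of rate $k$, so $\eps_i := i(Y_i - Y_{i-1})$ are i.i.d.\ standard exponentials, with the converse direction following from the memoryless property. (3) $\Leftrightarrow$ (4) is R\'enyi's representation of exponential order statistics: the ascending spacings of $n$ i.i.d.\ standard exponentials are independent, with $\eps_{j:n} - \eps_{j-1:n}$ exponential of rate $n - j + 1$, so telescoping and reversing direction produces the joint identity \eqref{order-stats} for every $n \ge k$.

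For (3) $\Leftrightarrow$ (6), I would use the beta-gamma algebra. Set $R_i := \hatgam_i / \hatgam_{i+1}$. A direct Jacobian computation, viewing each finite block $(R_1, \ldots, R_{n-1})$ as a smooth change of variables from the uniform order statistics $(\hatgam_i/\hatgam_n)_{1 \le i \le n-1}$, shows that $(R_1, R_2, \ldots)$ are mutually independent with $R_i$ having the beta$(i,1)$ density $i u^{i-1}$ on $(0,1)$. Consequently $Y_k - Y_{k-1} = \log(\hatgam_{k+1}/\hatgam_k) = -\log R_k$ is exponential of rate $k$, independent across $k$, which matches (3). Finally, (5) $\Leftrightarrow$ (6) is algebra: identifying $\hatgam_1 := \hateps_1$ and $\hatgam_{k+1} := \hateps_1 + \gamma_k$ (with $\tileps_i := \hateps_{i+1}$) yields $(e^{Y_k} - 1)\hateps_1 = \hatgam_{k+1} - \hateps_1 = \gamma_k$, which is \eqref{gamvars}, equivalent to \eqref{kendallrep} evaluated at the successive jump times $Y_k$. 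The a.s.\ identification \eqref{explimit} then follows from the strong law $\hatgam_{k+1}/(k+1) \to 1$ applied to the identity $(k+1)e^{-Y_k} = (k+1)\hatgam_1/\hatgam_{k+1}$.

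The main obstacle is the joint distributional content of the equivalences among (3)--(6), since each is a statement about the full sequence $(Y_k)_{k \ge 0}$ rather than a single marginal. The beta-gamma / uniform-order-statistics Jacobian computation is the linchpin: it converts marginal beta identities into mutual independence, and once it is in place the remaining characterizations reduce to algebraic bookkeeping.
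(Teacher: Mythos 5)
Your proposal is correct and follows essentially the same route as the paper: the classical equivalences (Kolmogorov forward equations for \eqref{negbingft}, exponential holding times for \eqref{exprep}, the Sukhatme--R\'enyi spacings representation for \eqref{order-stats}) treated exactly as the paper cites them, with the beta--gamma algebra for the ratios $\hatgam_k/\hatgam_{k+1}$ as the link between \eqref{exprep} and \eqref{logrep}, and Kendall's representation \eqref{kendallrep} recovered by the jump-time identification \eqref{gamvars} together with a strong law. The only cosmetic difference is that you obtain \eqref{explimit} from the strong law for the partial sums $\hatgam_{k+1}$ rather than for the Poisson process $N_\gamma$, which is the same argument in different clothing.
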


The equivalence of the first two descriptions can be found in 
Feller \cite[\S XVII.3]{MR0228020}. 
The representation \eqref{exprep} of the pure birth process in terms of independent exponential holding times is well known,
as is the equivalence of \eqref{exprep} and the representation \eqref{order-stats} in terms of the order statistics of a sequence of i.i.d.\ exponential variables,  
which is due to Sukhatme \cite{sukhatme1937tests} 
and R\'enyi \cite{MR0061792}. 
The representation \eqref{kendallrep} is due to Kendall \cite[Theorem 1]{MR0198551}. 
See also \cite[p.~127, Theorem 2]{MR0373040}, 
\cite[Theorem 3.12]{kallenberg-rm},
\cite{MR0293732}, 
and 
\cite[Theorem 4.3]{MR1463943} 
for a more general characterization due to Lundberg \cite{lundberg1940random} 
of when a mixed Poisson process can be time-changed to a Markovian birth process with stationary birth rates.
See also \cite{MR579823}, 
\cite[Lemma 6]{MR1825154},  
\cite[p.~532]{MR950166}, 
\cite{MR531764}, 
\cite{MR1102879}  
for variations and applications of the representation \eqref{kendallrep} of the Yule process.
The identification \eqref{explimit} follows immediately from \eqref{kendallrep} and the strong law of large numbers
for the Poisson process $N_\gamma$. Then \eqref{gamvars} follows, because the random time $t = Y_k$ when $N_Y$ first reaches $k + 1$,
corresponds to the random time $v = ( e^{Y_k} - 1 ) \hateps_1 $ when $N_\gamma$ first reaches $k$, that is $v = \gamma_k$.
The final representation \eqref{logrep} is just a rearrangement of \eqref{gamvars}, with the indicated change of variables,
as given in \cite[(5.3)]{MR0287620}. 
The equivalence of the two very different looking representations  \eqref{exprep} and \eqref{logrep} of $(Y_1, Y_2, \ldots )$
can be quickly checked as follows.
By well known beta-gamma algebra (see, e.g.~\cite{MR1654531}), in the construction \eqref{logrep}
the random variables
$\hatgam_1/\hatgam_2, \hatgam_2/\hatgam_3, \ldots, \hatgam_{i}/\hatgam_{i+1}$ are independent, with 
$$
\hatgam_{i}/\hatgam_{i+1} \ed 1 - \beta_{1,i}  \qquad \mbox{ and } - \log(1 - \beta_{1,i} ) \ed \eps_i/i  
$$
as required. Tracing back through the definitions, this calculation also implies Kendall's representation \eqref{kendallrep}.

\section{Proofs}
\label{sec:proofs}

In a sample $X_1, \ldots, X_n$ from  $\Pb$ defined by the RAM \eqref{stickbreak},
the number of values $X_i$ strictly greater than $1$ has the mixed binomial$(n,p)$ distribution with $p$ assigned the distribution of $1-H$. That is
\begin{equation}
\label{qstartp}
\P \left( \sum_{i=1}^n \ind ( X_i > 1) = m \right) = q^*(n,m):= {n \choose m } \mum{n-m}{m}
\end{equation}
where $\mum{n-m}{m}:= \E H^{n-m} ( 1 - H)^m$.
This $q^*(\ell, m)$ is the transition probability function of each of the {\em tail count Markov chains} 
$\Qstar_{\bullet:n} : = (\Qstar_{k:n}, k= 0, 1, \ldots )$ which describe the sequences of tail counts obtained when $n$ balls are
distributed according to the RAM with i.i.d.\ factors distributed like $H$:
\begin{equation}
\label{qstartdef}
\Qstar_{k:n} := \sum_{i = 1}^ n \ind ( X_i > k )  \qquad ( k = 0,1, \ldots ) .
\end{equation}
So $\Qstar_{\bullet:n}$ is the Markov chain with transition matrix \eqref{qstartp} and initial state  $\Qstar_{0:n} = n$.
The $*$ is used here to match notation with the overview \cite[\S 2]{MR2735350},  
where $W:= 1 -H$ so that $\mum{n-m}{m}:=  \E ( 1-W)^{n-m} W ^m$,
and to distinguish $q^*$ from the decrement matrix $q$ that appears in
Gnedin \cite{MR2044594} 
and Gnedin and Pitman  \cite{MR2122798}.  
Those articles are concerned with the {\em composition of $n$} generated by the configuration of $n$ balls
in boxes, that is the list of counts of non-empty boxes. For that purpose, the chain with transition matrix $q^*$ is watched only when it moves. By renormalizing the
off-diagonal elements of $q^*$, the resulting
transition matrix $q$ with zero diagonal entries is given by
\begin{equation}
\label{qmn}
q(n,m):=    \frac{ q^*(n,m) \ind (n > m )} {1 - q^*(n,n)} = {n \choose m } \frac{ \mum{n-m}{m} \ind (n > m )} {1 - \mum{0}{n} }
\end{equation}
which explains the factor $1 - \mum{0}{n}$ in the denominator of formulas of 
\cite{MR2044594} 
and \cite[Example 2]{MR2122798}. 
Beware that the {\em decrement matrix} of 
\cite{MR2044594} 
and \cite[(11)]{MR2122798} 
is the matrix with entries $q(n,n-m)$ rather than $q(n,m)$ as in \eqref{qmn}. 

By definition,
\begin{equation}
\label{mNdef}
M_n:= \max_{1 \le i \le n} X_i = \min \{k : \Qstar_{k:n} = 0  \},
\end{equation}
is the almost surely finite random time at which the transient Markov chain $\Qstar_{\bullet:n}$ is absorbed in state $0$.
Theorem \ref{thm:main} is concerned with the time reversal of this Markov chain, with time counted back from the step before the absorption time, that is
\begin{equation}
\label{Qrev}
\hatQ_{k:n}:= \Qstar_{M_n - 1 - k :n} ,  \qquad ( k \ge 0  )
\end{equation}
with the convention that $\hatQ_{k:n}:= \infty$ for $k \ge M_n$, meaning that $\hatQ_{\bullet:n}$ reaches an absorbing state $\infty$ at time $M_n$, just as
$\Qstar_{\bullet:n}$ reaches its absorbing state $0$ at time $M_n$. The initial state of $\hatQ_{\bullet:n}$ is $\hatQ_{0:n}:= \Qstar_{M_n - 1:n} = N_{M_n:n}$, the number of balls in the rightmost
occupied box after $n$ balls have been thrown. Also, the last state of 
$\hatQ_{\bullet:n}$ before absorption at $\infty$ is $\hatQ_{M_n-1:n} = \Qstar_{0:n} = n$.
Hunt \cite{MR0123364} showed
that the time reversal of a Markov chain with stationary transition probabilities such as
\eqref{Qrev} 
is 
another Markov chain with stationary transition probabilities, say $\hatq _n(\ell,m)$, which  may be described as follows.
For $n, m \ge 1 $ let
\begin{equation}
\label{potent}
g_{m:n} := \sum_{k=0}^\infty \P ( \Qstar_{k:n} = m )  = \sum_{k=0}^\infty \P ( \hatQ_{k:n} = m )  
\end{equation}
be the potential function giving the expected number of times that either $\Qstar_{\bullet:n}$ or its time reversal $\hatQ_{\bullet:n}$ hits state $m$.  
Then for positive integers $m$ and $\ell $ the expected number of $m$ to $\ell$ transitions of
the original tail count chain $\Qstar_{\bullet:n}$ can be computed in two different ways as
\begin{equation}
\label{revtrans}
g_{m:n} \, q^*(m,\ell) = g_{\ell:n} \, \hatq_n(\ell,m)
\end{equation}
which rearranges to give
\begin{equation}
\label{hatqdef}
\hatq_n(\ell,m) =  \frac{ g_{m:n}}{g_{\ell:n}} q^*(m,\ell)  .
\end{equation}

A proof of Theorem \ref{thm:main} will combine the following two lemmas.

\begin{lemma}
\label{lmm:mcrev}
Let $\Qstar_{\bullet:n}$ be a sequence of Markov chains, each with the same transition probability function $q^*(m,\ell)$ on the set $\{0,1, 2, \ldots\}$,
such that\/ $\P(M_n < \infty ) = 1$ for every $n$, where $M_n$ is the time $\Qstar_{\bullet:n}$ first hits state $0$, assumed to be an absorbing state.
Suppose further that for every $m = 1,2, \ldots$
\begin{equation}
\label{limassms}
\lim_{n \to \infty} \P(\Qstar_{0:n} = m) = 0 \mbox{ and } \lim_{n \to \infty} g_{m:n} = g_m \mbox{ with } 0 < g_m < \infty \mbox{ and } \sum_{m=1}^\infty g_m q^*(m,0) = 1. 
\end{equation}
Let $\hatQ_{\bullet:n}$ be
the chain $\Qstar_{\bullet:n}$ reversed back from time $M_n-1$ as in \eqref{Qrev}. Then, as $n \to \infty$,  the finite-dimensional distributions of 
$\hatQ_{\bullet:n}$  converge to those of a limit process  $\hatQ_{\bullet:\infty}$, which is a transient Markov chain with state space $\{1,2, \ldots\}$, 
transition probability matrix $\hatq(\ell,m):= g_m \, q^*(m,\ell)/g_\ell$, initial probability distribution 
\begin{equation}
\label{hatinit}
\P(\hatQ_{0:\infty} = m ) =  g_m q^*(m,0)  \qquad ( m = 1,2, \ldots)
\end{equation}
and potential function $\sum_{k=0}^\infty \P( \hatQ_{k:\infty} = m ) = g_m$ for all $m = 1,2, \ldots$.
\end{lemma}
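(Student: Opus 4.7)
The plan is to write down an exact closed form for the finite-dimensional distributions of $\hatQ_{\bullet:n}$ at each finite $n$, and then pass to the limit term-by-term using the three hypotheses collected in \eqref{limassms}. The structural observation that makes the closed form clean is that, since state $0$ is absorbing and each $m_i \ge 1$, the one-step transition $\Qstar_{M_n-1:n}=m_0 \to \Qstar_{M_n:n}=0$ exactly pinpoints the random location of $M_n$, so the sum over possible absorption times factors into a prefactor collecting the potential $g_{m_j:n}$ and a product of forward $q^*$-transitions.

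Concretely, fix positive integers $m_0,\dots,m_j$. By the reversal definition \eqref{Qrev} and the convention $\hatQ_{k:n}=\infty$ for $k\ge M_n$, the event $\{\hatQ_{i:n}=m_i,\,0\le i\le j\}$ is the disjoint union over $k\ge j$ of
\[
\{\Qstar_{k-j:n}=m_j,\ \Qstar_{k-j+1:n}=m_{j-1},\ \ldots,\ \Qstar_{k:n}=m_0,\ \Qstar_{k+1:n}=0\},
\]
since $m_0\ge 1$ forces $M_n=k+1$ on each such event. An application of the Markov property of $\Qstar_{\bullet:n}$ followed by reindexing $k'=k-j\ge 0$ then gives
\begin{equation}
\label{planfdd}
\P(\hatQ_{i:n}=m_i,\,0\le i\le j) \;=\; g_{m_j:n}\,q^*(m_j,m_{j-1})\cdots q^*(m_1,m_0)\,q^*(m_0,0).
\end{equation}
The hypothesis $g_{m:n}\to g_m\in(0,\infty)$ gives pointwise convergence of the right side to $g_{m_j}\,q^*(m_j,m_{j-1})\cdots q^*(m_0,0)$, and the short telescoping
\[
g_{m_j}\prod_{i=0}^{j-1}q^*(m_{i+1},m_i) \;=\; g_{m_0}\prod_{i=0}^{j-1}\frac{g_{m_{i+1}}\,q^*(m_{i+1},m_i)}{g_{m_i}} \;=\; g_{m_0}\prod_{i=0}^{j-1}\hatq(m_i,m_{i+1})
\]
identifies the limit as the joint law of the claimed Markov chain, with initial law \eqref{hatinit} coming from the $j=0$ case and transitions $\hatq$.

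The initial distribution is a bona fide probability measure by the assumed normalization $\sum_m g_m q^*(m,0)=1$. The potential identity $\sum_{k\ge 0}\P(\hatQ_{k:\infty}=m)=g_m$ follows by summing out $m_0,\dots,m_{k-1}$ in the limiting form of \eqref{planfdd} to obtain $\P(\hatQ_{k:\infty}=m)=g_m\,\P_m^{q^*}(M=k+1)$, where $M$ denotes the absorption time of a $q^*$-chain started at $m$, and then summing in $k$ and using $\P_m^{q^*}(M<\infty)=1$. I expect the only real obstacle is the bookkeeping behind \eqref{planfdd}: one must check that, with the $\hatQ=\infty$ convention, no mass is missed at small $k$ in the sum that reduces to $\sum_{k'\ge 0}\P(\Qstar_{k':n}=m_j)=g_{m_j:n}$; the assumption $\P(\Qstar_{0:n}=m)\to 0$ is a convenient safety net here, ensuring that no stray atom from a concentrated initial distribution of $\Qstar_{\bullet:n}$ survives in the limit. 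Passage to the limit itself involves only finitely many factors $q^*$ and a single factor $g_{m_j:n}$, so it is unproblematic pointwise convergence.
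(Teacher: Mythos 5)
Your closed form for the finite-$n$ finite-dimensional distributions is correct, and it is in substance the same identity the paper uses: the paper writes $\P(\hatQ_{i:n}=m_i,\,0\le i\le k)$ as $g_{m_0:n}\,q^*(m_0,0)\prod_{i<k}\hatq_n(m_i,m_{i+1})$ with $\hatq_n$ as in \eqref{hatqdef}, which telescopes to exactly your product $g_{m_k:n}\,q^*(m_k,m_{k-1})\cdots q^*(m_0,0)$; the factor-by-factor passage to the limit is then the same step in both arguments. Your verification of the potential identity for the limit (summing out the coordinates nearest absorption to get $g_m\,\P_m(M=k+1)$) is a nice explicit addition the paper leaves implicit, and is legitimate because the $n$-dependence sits in the single scalar $g_{m:n}$; you should, however, record why $\P_m(M<\infty)=1$: it follows from $g_m>0$ (so the chains visit $m$ with positive probability for large $n$) together with $\P(M_n<\infty)=1$ and the Markov property.

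The genuine gap is that you never show the limiting finite-dimensional arrays are honest probability distributions beyond the coordinate $k=0$, i.e.\ that $\hatq$ is a stochastic matrix, $\sum_m \hatq(\ell,m)=1$, equivalently $\sum_m g_m q^*(m,\ell)=g_\ell$ for every $\ell\ge 1$. Pointwise convergence of cylinder probabilities is compatible with mass escaping to $\infty$ in later coordinates (the reversed chain ``terminating''), in which case the limit is not a Markov chain on $\{1,2,\ldots\}$ with transition matrix $\hatq$, and your summing-out step for the potential would not describe a proper process. Tellingly, your argument never genuinely uses the hypothesis $\P(\Qstar_{0:n}=m)\to 0$ (you call it a safety net for small-$k$ bookkeeping, which is not its role), yet the conclusion fails without it: if $\Qstar_{0:n}=m_*$ for all $n$, your product formula and its limit still hold, but the reversed chain's last finite value is $m_*$ and the limiting row sum at $\ell=m_*$ is strictly less than $1$. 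The paper addresses exactly this point by combining the prelimit potential equation \eqref{potential}, $g_{\ell:n}=\P(\Qstar_{0:n}=\ell)+\sum_m g_{m:n}q^*(m,\ell)$, with $\P(\Qstar_{0:n}=\ell)\to 0$, so that $\sum_m \hatq_n(\ell,m)=1-\P(\Qstar_{0:n}=\ell)/g_{\ell:n}\to 1$; note that even then one must rule out escape of mass to large $m$ when interchanging the limit with the sum (Fatou alone gives only $\sum_m g_m q^*(m,\ell)\le g_\ell$), a point which in the paper's application is covered by the explicit RAM kernel and the renewal-theoretic control of $g_{m:n}$. At a minimum your write-up needs this stochasticity/consistency step; without it the identification of the limit as ``the claimed Markov chain'' is unsupported.
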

\begin{proof}
By decomposing over values of $M_n$, formula \eqref{hatinit} holds 
with $\hatQ_{0:n}$ instead of $\hatQ_{0:\infty}$ and $g_{m:n}$ instead of $g_m$ 
for each $n$. 
The assumptions \eqref{limassms} ensure that the initial state $\hatQ_{0:n}$ of $\hatQ_{\bullet:n}$ converges in distribution to $\hatQ_{0:\infty}$ with distribution \eqref{hatinit}. 
Together with the well known equation \eqref{potential} recalled in the Appendix, which is satisfied by the potential function
\begin{equation}
g_{\ell:n}  = \P(\Qstar_{0:n} = \ell) + \sum _{m = 1}^\infty g_{m:n} \, q^*(m,\ell)
\end{equation}
for every $n$, these assumptions imply that 
a stochastic matrix $\hatq(\ell,m)$ is obtained as the $n \to \infty$ limit of
$\hatq_n(\ell,m)$ in \eqref{hatqdef}. The probability of any particular finite sequence of  values of $(\hatQ_{i:n}, 0 \le i \le k)$, say
$(m_0, \ldots , m_k)$ is
$$
g_{m_0:n} \, q^*(m_0,0)  \, \prod_{i = 0}^{k-1} \hatq_n(m_i,m_{i+1}) \to  g_{m_0} \, q^*(m_0,0)  \prod_{i = 0}^{k-1} \hatq(m_i,m_{i+1}) \mbox{ as } n \to \infty
$$
and the conclusion follows.
\end{proof}

Lemma \ref{lmm:mcrev} also applies to the theory of random walks conditioned to
stay positive. For instance, for $\Qstar_{\bullet:n}$ a simple symmetric random walk with increments of $\pm 1$ stopped when it first reaches $0$,
the time reversed limit chain $\hatQ_{\bullet:\infty}$ is the random walk started at $1$ and conditioned never to hit $0$, as discussed in \cite{MR0375485}. 

That the assumptions of Lemma \ref{lmm:mcrev} are satisfied by the particular sequence of tail count Markov chains $\Qstar_{\bullet:n}$ derived from a
RAM is a consequence of the next Lemma. 

\begin{lemma}
\label{lmm:potent}
In the setting of Theorem \eqref{thm:main}, for a RAM such that $-\log(1-H)$ has a non-lattice distribution with finite mean $\mulog$, 
the limit of the potential function \eqref{potent} of the chain $\Qstar_{\bullet:n}$ as $n \to \infty$ is
\begin{equation}
\label{limpot}
\lim_{n \to \infty} g_{m:n} = \frac{ 1 }{ m \, \mulog  } .
\end{equation}
\end{lemma}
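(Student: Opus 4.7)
The plan is to identify $g_{m:n}$ with the expected number of renewals of $N_S$ inside a random interval whose left endpoint drifts to infinity and whose length has a known exponential distribution, then apply Blackwell's renewal theorem.

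First I would reinterpret $g_{m:n}$ as an expected gap. Taking expectations in \eqref{ghatfromq} and comparing with \eqref{potent} gives $g_{m:n}=\E\,\hatG_{m:n}$, while Lemma \ref{lmm:pp} expresses the gap as a renewal count, $\hatG_{m:n}=N_S(\eps_{n-m:n},\eps_{n-m+1:n}]$, with $N_S$ independent of the standard exponential order statistics $\eps_{i:n}$. Writing $U$ for the renewal function of the i.i.d.\ spacings $-\log(1-H_k)$ and conditioning on the sample exponentials yields
\[
  g_{m:n} \;=\; \E\bigl[U(\eps_{n-m+1:n}) - U(\eps_{n-m:n})\bigr].
\]

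Next I would use the Sukhatme--R\'enyi representation recorded in Proposition \ref{prp:yule} to decouple the length of this random interval from its location. There exist i.i.d.\ standard exponentials $\xi_1,\xi_2,\dots$ such that, jointly, $A_n:=\eps_{n-m:n}\ed\sum_{k=m+1}^{n}\xi_k/k$ and $G:=\eps_{n-m+1:n}-\eps_{n-m:n}\ed\xi_m/m$ are independent, with $G$ exponentially distributed with rate $m$. Since the harmonic series diverges, $A_n\convas\infty$ as $n\to\infty$, and
\[
  g_{m:n} \;=\; \E\bigl[U(A_n+G) - U(A_n)\bigr], \qquad A_n\convas\infty,\ G \text{ independent of } A_n.
\]

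I would then pass to the limit via Blackwell's renewal theorem together with dominated convergence. The non-lattice hypothesis on $-\log(1-H)$ is exactly Blackwell's requirement, so for each fixed $y>0$, $U(a+y)-U(a)\to y/\mulog$ as $a\to\infty$; coupled with independence of $G$ from $A_n$ and $A_n\convas\infty$, this gives $U(A_n+G)-U(A_n)\convas G/\mulog$. To move the limit under the expectation I would invoke two standard facts about the renewal function: the subadditivity estimate $U(a+y)-U(a)\le U(y)+O(1)$ uniformly in $a\ge 0$, a one-line consequence of the strong Markov property at the first renewal past $a$, and the linear bound $U(y)\le C(y+1)$ from the elementary renewal theorem $U(y)/y\to 1/\mulog$. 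Combined, they produce the integrable envelope $U(A_n+G)-U(A_n)\le C(G+1)$, whose expectation is finite because $\E G=1/m$. Dominated convergence then delivers $g_{m:n}\to\E[G]/\mulog=1/(m\,\mulog)$.

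The principal obstacle is precisely this dominated convergence step: Blackwell's theorem only provides pointwise convergence in the location parameter, whereas the length $G$ is random and unbounded. Subadditivity of $U$ is the crucial ingredient, as it lets the dominator depend on $G$ alone rather than on the drifting location $A_n$; once this is in place, the $O(y)$ growth of $U$ supplies the integrable envelope required to complete the argument.
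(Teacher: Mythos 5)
Your proposal is correct and follows essentially the same route as the paper: identify $g_{m:n}$ with the expected number of $N_S$-renewals in the top spacing $(\eps_{n-m:n},\eps_{n-m+1:n}]$ of the exponential order statistics, use the Sukhatme--R\'enyi representation to see this spacing as an exponential variable of rate $m$ independent of a location drifting to infinity, and conclude by Blackwell's renewal theorem. The only difference is that you supply the uniform-integrability details (subadditivity and linear growth of the renewal function giving a dominating envelope) that the paper explicitly glosses over, which is a welcome addition rather than a deviation.
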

\begin{proof} 
From the transition matrix \eqref{qstartp}, given that the chain $\Qstar_{\bullet:n}$ ever hits state $m$, the expected number of times
it stays there is $1/(1-q^*(m,m))  = (1 - \mum{0}{m})^{-1}$. So 
$$
g_{m:n} = \P( \Qstar_{k:n} = m \mbox{ for some } k \ge 0 ) (1 - \mum{0}{m} )^{-1}
$$
and \eqref{limpot} is equivalent to the result of Gnedin \cite[Proposition 5]{MR2044594} 
that under the above assumptions
\begin{equation}
\label{limpotgn}
\lim_{n \to \infty} \P( \Qstar_{k:n} = m \mbox{ for some } k \ge 0 ) = \frac{ 1 - \mum{0}{m} }{ m \, \mulog  } .
\end{equation}
Gnedin's proof of \eqref{limpotgn} can be simplified to obtain  \eqref{limpot} directly  as follows. 
Let $S_k:= \sum_{i = 1}^k - \log ( 1 - H_i)$ so the $S_k$ with $S_0 = 0$ are the times of renewals in a  renewal point process on $[0,\infty)$,
with associated counts $N_S(s,t]:= \sum_{k=1}^\infty \ind ( S_k \in (s,t])$ for $0 \le s < t < \infty$. 
Note that this renewal point process has zero delay as in Section \ref{sec:sampingram}, rather than the stationary delay as in Corollary \ref{crl:yule}.
Let 
$
\eps_{1:n} < \cdots < \eps_{n:n}
$
be the order statistics of $n$ i.i.d.\ exponential variables $\eps_1, \ldots, \eps_n$, so that the tail count chains 
may be constructed according to Lemma \ref{lmm:pp} as
\begin{equation}
\label{qstareps}
\Qstar_{k:n} := \sum_{i = 1}^n \ind ( \eps_i > S_k ) = \sum_{i = 1}^n \ind ( \eps_{i:n} > S_k ).
\end{equation}
Then for $1 \le m < n$ and $k \ge 0$, there is the almost sure identity of events
\begin{equation}
\label{qstareps1}
(\Qstar_{k:n}  = m ) = ( \eps_{n-m:n} < S_k \le \eps_{n-m + 1:n})
\end{equation}
and hence as $n \to \infty$
\begin{equation}
g_{m:n}  := \E \sum_{k=0}^\infty \ind (\Qstar_{k:n}  = m ) =  \E N_S(\eps_{n-m:n} , \eps_{n-m + 1:n}] 
 \to 
\frac{1}{m \, \mulog} 
\end{equation}
by application of Blackwell's renewal theorem and the representation \eqref{order-stats} of exponential order statistics,
which together with \eqref{exprep} gives $\E (\eps_{n-m+1:n} - \eps_{n-m :n}) = 1/m$.
\end{proof}

\begin{proof}[Proof of Theorem \ref{thm:main}]
Lemma \ref{lmm:potent} gives the convergence of potential functions
required in \eqref{limassms} for application of 
the time reversal Lemma  \ref{lmm:mcrev} to conclude that $\hatQ_{\bullet:N}$ converges in distribution to $Q_{\bullet}:= \hatQ_{\bullet:\infty}$ as
in the Lemma.  Since $\Qstar_{0:n} = n$ the first condition in \eqref{limassms} is satisfied,
and so is the last condition, because switching the order of summation and integration shows that the limit distribution \eqref{entrancelaw} of $Q_0$ sums to $1$
for any distribution of $H$ with $\mulog < \infty$.
Formula \eqref{backfor} comes from combination of \eqref{qstartp} and \eqref{limpot}.
The joint convergence in distribution of $Q$, $N$ and $G$ sequences as in \eqref{nglims} follows easily from the
result for the $Q$ sequences, using the explicit formulas for $N$ and $G$ in terms of $Q$, both before and after passage to the limit,
which are also summarized in the claim.
\end{proof}

\begin{proof}[Proof of Corollary \ref{crl:branching}]
This is clear by inspection of formula \eqref{backfor} for the transition mechanism of $\Qb$.
\end{proof}

\begin{proof}[Proof of Corollary \ref{crl:yule}]
This follows by comparison of Corollary \ref{crl:branching}
with the transition mechanism of the Yule process described in Proposition \ref{prp:yule}.
A more insightful argument can be given as follows,
by further development of the argument around \eqref{qstareps}.
Blackwell's renewal theorem gives $\lim_{t \to \infty} \BE N_{S}(t, t + h ] = h/\mulog$, as well as
convergence in distribution as $t \to \infty$ of the shifted counting process $(N_S(t , t+h ], h \ge 0 )$ 
to that of the stationary renewal process
\begin{equation}
\label{stat1}
N_S^*(0,h] := \sum_{i = 0}^\infty \ind ( S^*_i \le h )  \qquad (h \ge 0 )
\end{equation}
where $(S^*_z, z \in \BZ)$ is the collection of points of the two-sided stationary renewal point process with spacing distribution that of $-\log(1-H)$ and
the stationary start $S^*_0$ assigned the distribution \eqref{sodist}.
Together with the well known reversibility property of $N_S^*$ and the Yule representation 
\eqref{order-stats} of the order statistics of exponential variables, 
this gives the description of Corollary \ref{crl:yule}
for the limit in distribution of the $G$-sequence in Theorem \ref{thm:main}.
The corresponding results for the $N$- and $Q$-sequences follow easily using the duality between the $N$- and $G$-sequences
described in the introduction, in terms of interleaving the points of the two point processes.
\end{proof}

The convergence part of the above argument glosses over a few details, but they are technically easier than those dealt with in
\cite{MR2508790} 
in a quite similar proof.
The present method of looking down from the 
maximum of exponential order statistics means it is only necessary to deal with convergence in distribution of counts of a one-sided  renewal process,
looking back from a large random time, which is easily done. Moreover, the limit comes out simply expressed in terms of just one side of
the two-sided stationary renewal process, rather than involving both sides of the two-sided process, as in \cite{MR2508790}. 

\begin{proof}[Alternative proof of Corollary \ref{crl:yule}]
\label{sec:secondproof}
As indicated already in the introduction, it is quite easy to pass algebraically between the alternate formulations of
Theorems \ref{thm:gir} and \ref{thm:main}. Gnedin, Iksanov and Roesler already gave an interpretation of their
Theorem \ref{thm:gir} in terms of an interleaving of points of two point processes. However, their point process
interpretation  of the limiting $\Nb$ sequence is not obviously equivalent to the much simpler point process description of Corollary \ref{crl:yule}.
So it seems worth indicating how Corollary \ref{crl:yule} can nonetheless be derived from the construction of
$\Nb$ given in \cite{MR2508790}.
In 
\cite[Theorem 2.1]{MR2508790} and 
the proof of 
\cite[Lemma 3.4]{MR2508790} 
the limit sequence $\Nb$ was constructed by first exponentiating the two-sided stationary renewal process $S^*_{z}, z \in \BZ$ on the logarithmic scale to create a scale-invariant 
(self-similar) point process $\xi_z^*:= \exp(S_z^*), z \in \BZ$ with
\begin{equation}
0 < \cdots < \xi_{-2}^* < \xi_{-1}^* < 1 < \xi_0^* < \xi_1^* 
\end{equation}
together with an independent homogeneous Poisson rate $1$ sampling process, with points at say $0 < \gamma_1 < \gamma_2 < \cdots$ with
independent standard exponential spacings. The informal description from 
\cite{MR2508790} 
is to regard the $\gamma_k$ as the locations of balls thrown into
a doubly infinite array of boxes $(\xi^* _{z}, \xi^* _{z+1})$ indexed by $z \in \BZ$, then let $(N_0, N_1, \ldots)$ be the counts of balls in consecutive
boxes, starting with $N_0$ the count of balls in the first occupied box, that is the box $( \xi^*_Z, \xi^*_{Z+1})$ with $\xi^*_Z < \gamma_1 < \xi^*_{Z+1}$
for the random integer index $Z$  determined by this condition, ignoring the event of probability zero that there might be any ties between the $\xi_z$ and $\gamma_k$.
That is to say
\begin{equation}
\label{nfromxi}
N_i := \sum_{k=1}^\infty \ind ( \xi_{i} < \gamma_k < \xi_{i+1} ) \mbox{ where } \xi_{i} := \xi^*_{Z+i}\qquad(i=0,1,2,\dots).
\end{equation}
As argued in the proof of 
\cite[Lemma 3.4]{MR2508790} 
the $\xi_k, k \ge 1$ may be constructed directly as
\begin{equation}
\label{xik}
\xi_k = \frac{ \gamma_1 }{W_0} \prod_{i=1}^{k-1} (1 - H_i)^{-1}
\end{equation}
where the $H_i$ are i.i.d.\ copies of the basic hazard variable, $1/W_0$ with $1/W_0 \ed \exp(S^*_0)$ is a further independent  multiplicative version of the stationary delay variable, 
and the Poisson points $\gamma_k$ are assumed to be independent of all these variables.  Note that the precise value of $\xi_0$ is of no importance in \eqref{nfromxi}, except for the
fact that $\xi_0 < \gamma_1 < \xi_1$, so that \eqref{nfromxi} for $i=0$ reduces to 
\begin{equation}
N_0 := 1 + \sum_{k=2}^\infty \ind ( \gamma_k < \xi_{1} ) .
\end{equation}
This formula states that count of balls in the leftmost occupied box is $1$ for $\gamma_1$ plus however many following $\gamma_k$ are to the left of the first split $\xi_1$ between boxes.
This construction is made difficult by the way the definition of the break points between boxes 
involves one of the Poisson sample points $\gamma_1$, which then affects the distribution of the other Poisson sample points which are all conditioned to  be greater than $\gamma_1$.
However, the construction can be simplified by remarking that the dual sequence $\Gb$ counting box dividers between balls is
\begin{align}
\label{gidef}
\notag 
G_i := \smash[b]{\sum_{k=1}^\infty} \ind ( \gamma_i < \xi_k < \gamma_{i+1} ) &= N^*_S ( \log \gamma_i, \log \gamma_{i+1}) \\
\notag						       &\ed N^*_S ( \log (\gamma_i/\gamma_1), \log (\gamma_{i+1}/\gamma_1))  \mbox{ by stationarity of } N^*_S\\
							       &\ed N^*_S ( \T_{i-1}, \T_i )
\end{align}
according to the representation \eqref{logrep} of the birth times $\T_i$ of a Yule process.
Moreover, it is clear that this identity in distribution holds jointly as $i$ varies.
Thus the point process description of $\Nb$ in
\cite{MR2508790}, 
implies the characterization of $\Gb$ provided in Corollary \ref{crl:yule}, hence also the
dual description of $\Nb$ in terms of the Yule process.
\end{proof}


The proof of Corollaries \ref{crl:gth} and \ref{crl:g} is based on theory of increasing Markov chains recalled in the Appendix.

\begin{proof}[Proof of Corollary \ref{crl:g}]
This follows easily from Proposition \ref{prp:transmc}.
\end{proof}

\begin{proof}[Proof of Corollary \ref{crl:gth}]
That the limiting gaps $\Gb$ for the GEM$(0,\theta)$ model are independent with the indicated geometric distributions can be
read from the exact result \eqref{gapstheta} for a sample of size $n$, as explained in the introduction.
As for the converse assertions, suppose first that the limiting gaps $\Gb$ are geometrically distributed.
Then 
$$
h_j:= \P(G_j \ge 1) = 1 - p_{j,j} = 1 - \mum{0}{j}.
$$
On the other  hand, \eqref{limpotgn} gives $h_j = (1 - \mum{0}{j})/ ( j \mulog)$.
Equating these two expressions for $h_j$ gives  for $j = 1,2, \ldots$.
$$
\E (1 - H)^j = \mum{0}{j} = \frac{ \theta }{ \theta + j } = \E (1 - \beta_{1,\theta})^j
$$
where $\theta:= 1/\mulog$ and $\beta_{1,\theta}$ has the beta$(1,\theta)$ distribution with
density $\theta(1-u)^{\theta - 1}$ at $ 0 < u < 1$. Hence $H \ed \beta_{1,\theta}$.

Suppose next that 
 $\P(\hatG_{1:n}\ge 1)$ does not depend on $n$. Given $\Pb$, the conditional 
probability 
$$
\P (\hatG_{1:n}\ge 1|\Pb)=n\sum_{m=1}^{\infty} P_m(P_1+\dots+P_{m-1})^{n-1}
$$
by the decomposition over the maximal value $m$ which appears only once iff $\hatG_{1:n}\ge 1$. This gives for a RAM with i.i.d.\ factors $H_1,H_2,\dots $
\begin{align}
\notag 
\P(\hatG_{1:n}\ge 1)&{}=n \sum_{m=1}^\infty \E\Bigl[ H_m \prod_{i=1}^{m-1}(1-H_i)\Bigl(1-\prod_{i=1}^{m-1}(1-H_i)\Bigr)^{n-1}\Bigr]\\
\notag 
&{}=n\mum{1}{0}\sum_{m=1}^{\infty}\sum_{k=0}^{n-1}\binom{n-1}{k}(-1)^k(\mum{0}{k+1})^{m-1}\\
&{}=n\mum{1}{0}\sum_{k=0}^{n-1}\binom{n-1}{k}(-1)^{k} \frac{1}{1-\mum{0}{k+1}}.
\end{align}
By a simple algebra, the equality $\P(\hatG_{1:n} \ge 1)=\P(\hatG_{1:n+1} \ge 1)$ translates into the following
recursion
\begin{equation}\label{rec}
\frac{n+1}{ 1- \mum{0}{n+1}}=\sum_{k=0}^{n-1}\binom{n}{k}(-1)^{n-k-1}\frac{k+1}{1 - \mum{0}{k+1}}
\end{equation}
which allows to define $\mum{0}{n}$ one by one starting from arbitrary $\mum{0}{1}\in(0,1)$. Since the moments $\mum{0}{n}$ determine the distribution
of $H$, and in GEM$(0,\theta)$ model 
satisfy \eqref{rec} as follows from \eqref{gapstheta} or can be easily verified directly using \eqref{thetavals}, the claim follows.

Finally, suppose that the limiting gaps $G_j$ are independent as $j$ varies.
According to Proposition \ref{prp:rec}, the jumping chain derived from $\Qb$ is the strict record chain
derived from the initial distribution $\init{j}:= \mum{j}{0}/(j \mulog)$. This implies that
the transition matrix $p$ of $\Qb$ must satisfy the identity
\begin{equation}
\label{pmmnn}
\frac{ p_{m,n} }{ 1 - p_{m,m} } = \frac{ \init{n} }{ 1 - \init{1} - \cdots - \init{m} } \mbox{ for } 1 \le m \le n .
\end{equation}
Plugging in the formula \eqref{backfor} for $p$, this becomes
\begin{equation}
\label{mulogeq}
\frac{ { n-1 \choose m-1} \mum{n-m}{m} } { 1 - \mum{0}{m} } = \frac{ \mum{n}{0}/n } {\mulog - \mum{1}{0}/1 - \cdots - \mum{m}{0}/m  } .
\end{equation}
Consider now a distribution of $H$ on $(0,1)$ with $\E(H) = \mu_1$ and $\E - \log(1-H) = \mulog$ for some specified values
of $\mu_1$ and $\mulog$ subject to $0 < \mu_1 < \mulog < \infty$. 
Let $\theta:= \mu_1 /( \mulog - \mu_1)$ 
and
$\mu_n:= \mum{n}{0} = \E H^n$.
Then \eqref{mulogeq} for $m = 1$
yields easily
$$
\mu_n = \frac{n \mu_{n-1} }{\theta + n } \qquad ( n \ge 2 )
$$
and hence by induction on $n$
$$
\mu_n = ( 1 + \theta ) \mu_1  \frac{ (1)_n } { (1 + \theta)_n }  = ( 1 + \theta ) \mu_1  \E \beta_{1,\theta}^n
\qquad ( n \ge 1) .
$$
Thus $\E H^n = c \E \beta_{1,\theta}^n$ for every $n \ge 1$ for a constant factor $c:= (1 + \theta) \mu_1$.
To complete the argument, it only remains to show that $c = 1$, which achieved  by the following lemma.
\end{proof}

\begin{lemma}\label{lmm:thoH}
Let $\tilH$ and $H$ be two random variables with 
\begin{gather}
\label{in01}
\P( 0 < \tilH < 1 ) = \P( 0 < H < 1 ) = 1 \qquad \mbox{ and}\\
\label{feq}
\E \tilH^n   = c \E H^n  \mbox{ for some constant } c  \mbox{ and  all } n = 1,2, \ldots.
\end{gather}
 Then $c = 1$ and $\tilH \ed H$.
\end{lemma}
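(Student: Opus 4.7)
The plan is to reduce the moment identity \eqref{feq}, which only involves moments of order $n \ge 1$, to one of order $n \ge 0$ by reweighting, and then invoke uniqueness in the Hausdorff moment problem on $[0,1]$. Let $\mu$ and $\nu$ denote the distributions of $\tilH$ and $H$ (both supported strictly inside $(0,1)$ by \eqref{in01}) and set $\lambda := \mu - c\nu$, a finite signed measure on $(0,1)$. Then \eqref{feq} becomes $\int_0^1 x^n\,d\lambda(x) = 0$ for every $n \ge 1$.

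The key step is to consider the auxiliary signed measure $\sigma$ on $[0,1]$ with $d\sigma(x) := x\, d\lambda(x)$, whose $n$th moment is $\int x^n\,d\sigma = \int x^{n+1}\,d\lambda = 0$ for every $n \ge 0$. Writing the Jordan decomposition $\sigma = \sigma^+ - \sigma^-$ as a difference of finite positive measures on $[0,1]$, one sees that $\sigma^+$ and $\sigma^-$ share all moments; since finite positive measures on $[0,1]$ are uniquely determined by their moment sequences, it follows that $\sigma^+ = \sigma^-$, hence $\sigma \equiv 0$.

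From $x\, d\lambda \equiv 0$ and the strict positivity of $x$ throughout the support $(0,1)$ of $\lambda$, I would conclude $\lambda \equiv 0$ (formally, for any $\eps > 0$, $\lambda(A) = \int_A x^{-1}\,d\sigma(x) = 0$ for Borel $A \subseteq [\eps,1)$, and letting $\eps \downarrow 0$ extends this to all of $(0,1)$). Therefore $\mu = c\nu$; comparing total masses yields $c=1$ and $\tilH \ed H$. The only real subtlety is the multiplication-by-$x$ trick, which uses the strict positivity hypothesis \eqref{in01} essentially: were an atom at $0$ permitted, taking $\tilH = 0$ with probability $1-c$ and $\tilH = H$ otherwise would produce a counterexample with $c \neq 1$.
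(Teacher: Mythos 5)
Your argument is correct, but it takes a genuinely different route from the paper's. You linearize the problem: with $\lambda:=\mu-c\nu$ the hypothesis says $\int x^n\,d\lambda=0$ for $n\ge 1$, the multiplication-by-$x$ trick upgrades this to a signed measure $\sigma$ with vanishing moments of \emph{all} orders $n\ge 0$, Jordan decomposition plus Hausdorff determinacy on $[0,1]$ force $\sigma\equiv 0$, and since $\lambda$ is carried by $(0,1)$ where $x>0$ you may divide $x$ back out (your truncation to $[\eps,1)$ and $\eps\downarrow 0$ is the right way to justify this), giving $\mu=c\nu$ and hence $c=1$ by comparing total masses. The paper instead argues probabilistically: it reduces to $0<c<1$ by symmetry, introduces a Bernoulli$(c)$ variable $B_c$ independent of $H$, notes $\E(B_cH)^n=c\,\E H^n$ for $n\ge 1$, and invokes the same moment determinacy on $[0,1]$ to conclude $\tilH\ed B_cH$, whose atom of mass $1-c>0$ at $0$ contradicts \eqref{in01}; so $c=1$ and then $\tilH\ed H$ directly. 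Amusingly, your closing sharpness remark (the counterexample $\tilH=0$ with probability $1-c$, $\tilH=H$ otherwise) is exactly the paper's proof mechanism, used there as the argument rather than as a comment. Both proofs rest on the same determinacy fact; yours buys a uniform treatment of all values of $c$ with no case split and no auxiliary randomization, while the paper's is shorter and makes transparent which distributions the strict-positivity hypothesis is excluding.
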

\begin{proof}
By switching the roles of $H$ and $\tilH$ if $c > 1$, it is enough to consider the case when $0 < c < 1$.
Let $B_c$  denote a Bernoulli$(c)$ variable independent of $H$. Then 
$$
\E  ( B_c H )^n =  
\E  ( B_c ^n H ^n)  = 
\E  ( B_c H ^n)  = \E  ( B_c ) \E H ^n  = c\,  \E H^n \qquad (n \ge 1 ).
$$
Because a distribution on $[0,1]$ is uniquely determined by its moments, it follows from \eqref{feq} that
$\tilH \ed B_c H$, the distribution of which has an atom of magnitude $1-c >0$ at $0$. 
Thus \eqref{feq} for $0 < c < 1$ violates \eqref{in01}, and the conclusion follows.
\end{proof}

{\bf Remark.}
An alternative proof that the limiting gaps $\Gb$ derived from GEM$(0,\theta)$ are both independent and
geometrically distributed is obtained via Proposition \ref{prp:rec}. According to that proposition, it is enough
to check that the transition matrix $p$ of $\Qb$ satisfies \eqref{pmmnn} for all $1 \le m \le n$,
that is to check \eqref{mulogeq} for the various moments derived from $H$ with beta$(1,\theta)$ distribution.
But using the evaluations \eqref{muij} of beta$(1,\theta)$ moments,
after cancelling common factors, the identity \eqref{mulogeq}  reduces to  the summation formula
\begin{equation}
\label{theta:msum}
\sum_{i=1}^m \frac{ ( 1)_{i-1} \, \theta  }{ ( 1 + \theta )_{i} } = 1 - \frac{ ( 1)_m }{ ( 1 + \theta)_m }  \qquad ( m = 1,2, \ldots).
\end{equation}
This is easily checked by induction on $m$, along with its
probabilistic interpretation
\begin{equation}
\label{prob:theta:msum}
\P( Q_0 > m ) \eqth \frac{ ( 1)_m }{ ( 1 + \theta)_m }  \qquad ( m = 1,2, \ldots).
\end{equation}
The limit case
\begin{equation}
\label{theta:infsum}
\sum_{i=1}^\infty \frac{ ( 1)_{i-1} \theta }{ ( 1 + \theta )_{i} } =  1  \qquad ( \theta > 0 )
\end{equation}
is the evaluation of $  _2F_1(1,1;2 +\theta;z)\,\theta /(1 + \theta)$ at $z = 1$ using Gauss's hypergeometric summation formula
$_2F_1(a,b;c;1) = \frac{\Gamma(c)\Gamma(c-a-b)}{\Gamma(c-a)\Gamma(c-b)}$ $( \operatorname{Re}(c-b-a)>0)$,
for $a=b=1$ and $c = 2 + \theta$.

\section{Further Remarks}

\subsection{Checking the entrance law}
Implicit in Theorem \ref{thm:main} is the fact that for any distribution of $H$ on $(0,1)$ with
$\mulog:= \E - \log(1 - H) < \infty$ and $-\log(1-H)$ non-lattice, that for the Markov chain $\Qb$
with mixed negative binomial transition matrix \eqref{backfor} derived from moments of $H$, with 
initial distribution
$$
p_{0,n}:= \frac{ \mum{n}{0}  }{ n\, \mulog }
$$
as in \eqref{entrancelaw},
the hitting probabilities $h_n:= \P(Q_k = n \mbox{ for some } k \ge 0 )$ are given by the formula
$$
h_n = \frac{ 1 - \mum{0}{n} } { n \, \mulog }  \qquad ( n \ge 1 ) .
$$
In fact, this is the case whenever $\mulog < \infty$, even if $-\log(1-H)$ has a lattice distribution. For, by the theory of increasing
 Markov chains recalled in 
Proposition \ref{prp:transmc},
the sequence $h_n$ is the unique solution of the system of last exit equations 
$$
h_n = p_{0,n} + \sum_{m = 1}^{n-1} \frac{ h_m \, p_{m,n} } { 1 - p_{m,m} }  \qquad (n = 1,2, \ldots )
$$
which for the particular mixed negative binomial transition matrix \eqref{backfor} expands to 
$$
\frac{ 1 - \mum{0}{n} } { n \mulog }   = \frac{ \mum{n}{0}  }{ n \mulog } + \sum_{m = 1}^{n-1} \frac{ 1 } { m \mulog } {n-1 \choose m-1} \mum {n-m}{m} .
$$
After writing ${n-1 \choose m-1}  = \frac{m }{n} { n \choose m }$, cancelling common factors, and some rearrangement of terms, this reduces to
$$
1 = \sum_{m=0}^n { n \choose m } \mum{n-m}{m} = \E [ H + (1-H) ] ^n
$$
which is true for any distribution of $H$ on $[0,1]$ by the binomial theorem.

\subsection{Some checks for $N_0$ and $N_1$}
According to Corollary \ref{crl:yule}, the distribution of $N_0 = Q_0$, representing the limit distribution of the number of balls $N_{M_n:n}$ in the last box as $n \to \infty$,
and the probability of corresponding events in terms of the limiting gaps $G_i$, 
can be evaluated as
\begin{equation}
\label{nodiost}
\BP(N_0 > k ) = \BP(G_i = 0, 1 \le i \le k) = \BP[ N^*_S(Y_k) = 0 ] = \BP(Y_k < S^*_0)  
\end{equation}
for $k = 1,2, \ldots$.
This probability can be computed by conditioning on $S^*_0$, and using the representation $Y_k \ed \max_{1 \le i \le k} \eps_i$ of 
\eqref{order-stats},
which makes $\P(Y_k \le s ) = (1 - e^{-s})^k$. Thus
\begin{align}
\label{complemk}
\notag
\BP(N_0 > k ) &=  \frac{1} {\mulog} \int_0^\infty \BP( - \log ( 1 - H) > s ) (1-e^{-s})^k \, ds \\
              &= \frac{1} {\mulog} \int_0^1 \BP( H > u ) \frac{ u^k }{1-u }  \,du      .
\end{align}
by the change of variables $u = 1- e^{-s},  ds = du/(1-u)$.
In the special case of GEM$(0,\theta)$, with $\mulog \eqth  \theta^{-1}$ and $\P(H>u) \eqth  (1-u)^\theta$, this gives easily
\begin{equation}
\BP(N_0 > k ) \eqth \theta \int_0^1 u^k (1-u)^{\theta -1} \,du   \eqth  \frac{(1)_k}{ (1 + \theta)_k }  .
\end{equation}
But for the general case it is easier to work with the complementary event. The same change of variables gives
\begin{align}
\notag 
\BP(N_0 \le k ) &=  \frac{1} {\mulog} \int_0^\infty \BP( - \log ( 1 - H) > s ) ( 1 - (1-e^{-s})^k ) \, ds \\
\notag &=  \frac{1} {\mulog} \int_0^1 \BP( H > u ) \frac{  1 - u^k }{ 1-u } \, du  \\
\notag &=  \frac{1} {\mulog} \int_0^1 \BP( H > u ) ( 1 + u + \cdots + u^{k-1} ) \, du \\
&=  \frac{1} {\mulog} \left( \frac{ \E H }{1}  + \frac{\E H^2 }{2} + \cdots \frac{ \E H^k } {k} \right) 
\end{align}
in agreement with formula \eqref{entrancelaw} for $\P(N_0 = k) = \P(Q_0 = k)$.

Next, consider the joint limit law of $N_0$ and $N_1$ for a general RAM, given $N_0:= N_Y(S^*_0) = m \ge 1$ say, meaning that the event $Y_{m-1} < S^*_0 < Y_{m}$
has occurred.
The case $m = 1$ is easiest. Then, as in the previous computation, except that the increment $N_Y(S^*_1) - N_Y(S^*_0)$ involves $S^*_1 - S^*_0 \ed - \log(1-H)$ instead of the
stationary delay, for $k \ge 1$
\begin{equation}
\label{renew}
\P(N_1 \ge k \giv N_0 = 1 ) = \int_0^\infty \BP ( - \log ( 1 - H) \in ds ) ( 1 - e^{-s} ) ^k  = \BE H^k
\end{equation}
by the now familiar change of variable. That is to say, the distribution of $N_1 $ given $N_0 = 1$ is mixed geometric$(1-H)$.
Given $N_0 = m$ instead of $N_0 = 1$, the situation is similar, except that there are $m$ independent lines of descent in the Yule process, 
and the total number of births in the Yule process in all $m$ lines of descent must be counted before the next renewal time. 
Now the mixing variable $1-H = e^{-(S^*_1-S^*_0)}$ is the same for all lines of descent, so conditional on $H$ and $N_0 = m$ the distribution of
$N_1$ is the sum of $m$ independent geometric$(1-H)$ variables. That is to say, $N_1$ given $N_0 = m$ has the mixed negative binomial$(m,1-H)$
distribution, as in Corollary \ref{crl:branching}.

\subsection{Results when $\mulog = \infty$}
It is known that 
\begin{equation}
\label{muinf}
\mbox{ {\em if $\mulog = \infty$ then $\hatG_{i:n}$ converges in probability to $0$ for every $i$,}}
\end{equation}
corresponding to a piling up of values at the sample maximum $M_n$, so the 
number $\hatN_{0:n}:= N_{M_n:n}$ of ties with the maximum converges in probability to $\infty$. 
See \cite{MR2538083} 
\cite{MR3176494} 
\cite{MR2926172} 
\cite{MR3416065} 
for further treatment of limit theorems in this case.

\subsection{Limits for small counts}
The simple formula \eqref{valposg} for the limiting mean $\E G_j$ is a companion of similar
limit results for counts $K_{j:n}:= \sum_{b=1}^{M_n}\ind(N_b = j) = \sum_{i= 0}^{M_n-1} \ind(\hatN_i = j)$, which were derived in 
\cite[Lemma 3.2 and Theorem 3.4] {MR2508790} 
and are checked again here:
\begin{align}
\lim_{n \to \infty} \BE K_{j:n} & = \BE K_j = \frac{1}{j \, \mulog } \qquad \mbox{ for } j = 1,2, \ldots, \\
\lim_{n \to \infty} \BE K_{0:n} &= \BE K_0  = \frac{ \BE ( - \log H ) } {\mulog}
\end{align}
where  the limit variables $K_j$  are encoded in $\Nb$ as
\begin{equation}
K_j :=  \sum_{i = 0}^\infty \ind ( N_i = j)  \qquad ( j = 0,1, \ldots).
\end{equation}
It is known  \cite{MR1177897} 
that in the GEM$(0,\theta)$ model the $K_j$ for $j \ge 1$ are independent Poisson variables with means $\theta/j$ for $j = 1,2, \ldots$. 
It would be interesting to have converses of these properties, similar to Corollary \ref{crl:gth}.
It is also known \cite{MR3176494} 
that for a general RAM subject to \eqref{nonlat} the distribution of $K_0$ is always mixed Poisson, 
that is $K_0 \ed N(\tau)$ for $N$ a rate one Poisson process independent of some non-negative random variable $\tau$, with $\tau \eqth \theta | \log \beta_{1,\theta}|$ in the GEM$(0,\theta)$ case
with $H \eqth \beta_{1,\theta}$.
\subsection{Examples}
\label{sec:examples}

Even without detailing the bijection between the limiting $N$ and $G$ sequences, it is easy to express the limits in distribution of various statistics of interest in 
terms of either of these sequences.
Most obviously, the number $L_n$ of ties with the maximum and the number $K_{0:n}$ of missing values are encoded in the counts and gaps as 
\begin{equation}
\label{biv1}
L_n = \min \{ i : G_{i:n} > 0 \}  = N_{M_n:n} \mbox{ and } K_{0:n} = \sum_{i=1}^n (G_{i:n} - 1)_+  = \sum_{r = 0}^{M_n-1} \ind (N_{M_n - r:n} = 0)
\end{equation}
and it follows from Theorem \ref{thm:main} that
\begin{equation}
\label{biv2}
(L_n, K_{0:n}) \convd \left( \min \{ i : G_{i} > 0 \}  , \sum_{i=1}^\infty (G_{i} - 1)_+  \right) = \left(N_0, \sum_{r= 0}^\infty \ind (N_r = 0 ) \right) .
\end{equation}
Here the convergence in distribution of $L_n$ follows immediately from either of the sequence limit theorems. 
The convergence in distribution of $L_n$ to $N_0$, and the description \eqref{entrancelaw} of its limit law, was given in \cite[Theorem 2.1]{MR2508790}, as well 
as in \cite[Theorem 2.4]{MR2538083}.  
The result for $K_{0:n}$ takes more work, because it is not
immediately obvious that its limit can be read as indicated just from convergence of  finite dimensional distributions in \eqref{nglims}.
Also, according to \cite[Proposition 3.3]{MR2508790}, 
unlike the case
for $L_n$, the limit of $K_{0:n}$ can be infinite almost surely, and is so iff $\E[ - \log H ] = \infty$.  These issues were taken care of in  
\cite{MR2508790} 
for the representation  in terms of the $N_i$, and a similar discussion can be provided for the representation above in terms of the $G_i$.
This $G$-representation of $K_{0:n}$ and its limit in distribution can also be seen in the special case of GEM$(0,\theta)$ in \cite[(19)]{MR2538083}. 

\appendix
\section{Increasing Markov chains}
Consider a Markov chain $\Qb$ with state space $\{1,2,\dots\}$ and stationary transition matrix $(p_{i,j})$. 
Call $\Qb$ {\em weakly increasing} if $p_{i,j} = 0$ for $j < i$, so that
$$
\P( Q_0 \le Q_1 \le \cdots ) = 1
$$
and call $\Qb$ {\em strictly increasing} if $p_{i,j} = 0$ for $j \le i$, so that
$$
\P( Q_0 <  Q_1 < \cdots ) = 1 .
$$

The following proposition recalls some standard facts about such Markov chains \cite{MR1600720}.

\begin{proposition}
\label{prp:transmc}
Let $\Qb$ be a weakly increasing Markov chain with state space $\{1, 2, \ldots \}$, initial distribution
\begin{equation}
\init{j}:= \P(Q_0 = j)
\end{equation}
and stationary transition matrix $(p_{i,j})$. Suppose that $\Qb$ has no absorbing states, that is $p_{j,j}<1$ for all $j$.
Let $G_j:= \sum_{k=0}^\infty \ind ( Q_k = j)$ be the number of visits to state $j$. Then
\begin{itemize}
\item[(i)]
$G_j$ has the zero-modified geometric distribution
with parameters $(h_j, 1 - p_{j,j} )$, meaning that
\begin{equation}
\label{zmod}
\P(G_j \ge k) =  h_j  p_{j,j}^{k-1}   \qquad ( k = 1,2, \ldots)
\end{equation}
where
\begin{equation}
\label{hdef}
h_j: = \P(G_j \ge 1) = \P(Q_k = j \mbox{ for some } k \ge 0 ); 
\end{equation}
\item[(ii)] the conditional distribution of $G_j$ given $G_j \ge 1$ is the
geometric$( 1 - p_{j,j})$ distribution on $\{1,2, \ldots\}$, so
\begin{equation}
\label{zmodplus}
g_j:= \E G_j  = \sum_{k=1}^\infty \P(G_j \ge k ) = \frac{ h_j }{ 1 - p_{j,j}} ;
\end{equation}
\item[(iii)]
the potential sequence $(g_j, j \ge 1)$ is the unique non-negative solution of
the system of equations
\begin{equation}\label{potential}
g_j = \init{j} + \sum_{i = 1}^\infty g_i p_{i,j} \qquad ( j \ge 1);
\end{equation}
\item[(iv)]
equivalently,
the sequence of hitting probabilities $(h_j, j \ge 1)$ is the unique non-negative solution of
the system of equations
\begin{equation}\label{hitting}
h_j = \init{j} + \sum_{i \ne j }^\infty \frac{ h_i p_{i,j} }{1 - p_{i,i}} \qquad ( j \ge 1) .
\end{equation}
\end{itemize}
\end{proposition}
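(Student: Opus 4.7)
The key structural observation is that because $\Qb$ is weakly increasing with no absorbing states, once the chain leaves state $j$ it can never return (from any $k > j$, every allowed transition goes to a state $\geq k > j$). Consequently $\{k : Q_k = j\}$ is an interval, so $G_j$ counts a single run of consecutive visits to $j$. Conditional on $\{G_j \ge 1\}$, each successive step stays at $j$ with probability $p_{j,j}$ and leaves permanently with probability $1 - p_{j,j} > 0$, so the conditional law of $G_j$ given $G_j \ge 1$ is geometric$(1 - p_{j,j})$ on $\{1,2,\ldots\}$. Unconditioning gives $\P(G_j \ge k) = h_j\, p_{j,j}^{k-1}$ for $k \ge 1$, which is (i); summing the geometric series then yields $g_j = h_j/(1 - p_{j,j})$, which is (ii). In particular $g_j \le 1/(1 - p_{j,j}) < \infty$, so the potential is automatically finite before any uniqueness discussion.

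For (iii), writing $g_j = \sum_{k=0}^\infty \P(Q_k = j)$, isolating the $k = 0$ term, and applying the Chapman--Kolmogorov identity to the tail produces the potential equation \eqref{potential}. For uniqueness, the weakly-increasing property kills every term with $i > j$, turning \eqref{potential} into the lower-triangular system
\begin{equation*}
(1 - p_{j,j})\, g_j = \init{j} + \sum_{i=1}^{j-1} g_i\, p_{i,j} \qquad (j \ge 1),
\end{equation*}
whose diagonal entries $1 - p_{j,j}$ are strictly positive by assumption. This determines $g_1, g_2, \ldots$ uniquely and recursively from $\initnoarg$, so the non-negative solution of \eqref{potential} is unique.

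Finally, (iv) follows from (iii) by substituting $g_j = h_j/(1 - p_{j,j})$ into \eqref{potential}, separating the $i = j$ term $g_j p_{j,j} = h_j p_{j,j}/(1-p_{j,j})$ on the right, and simplifying via $1/(1-p_{j,j}) - p_{j,j}/(1-p_{j,j}) = 1$. A direct probabilistic reading: $h_j = \init{j}$ plus the probability that the chain reaches some $i \ne j$ and, at its last visit to $i$, jumps to $j$, where the latter conditional jump probability is $p_{i,j}/(1 - p_{i,i})$ by the same geometric argument used for (i). Uniqueness of \eqref{hitting} follows from the same triangular recursion as for \eqref{potential}. There is no real obstacle here; the only point to get right is the ``single run'' observation, which depends essentially on the weakly-increasing hypothesis and decouples the local law of $G_j$ from the history of the chain before its first visit to $j$.
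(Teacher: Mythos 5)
Your proof is correct and follows essentially the same route as the paper: the no-return property of a weakly increasing chain makes the visits to $j$ a single geometric run (the paper cites the standard transient-chain lemma with return probability $p_{j,j}$, you derive it directly), the potential equation \eqref{potential} comes from the same first-step/Chapman--Kolmogorov decomposition, uniqueness from the same triangular structure, and (iv) from the same substitution $g_j = h_j/(1-p_{j,j})$.
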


\begin{proof}
For transient Markov chains it is well known (e.g.~\cite[Lemma 1.5.2]{MR1600720}) that claim (ii) holds with the probability to ever return back from state $j$ instead
of $p_{j,j}$; that this probability is $p_{j,j}$ for weakly increasing Markov chains is obvious. Then \eqref{zmod} follows directly from the definitions
and implies \eqref{zmodplus} by summation.
The mean number of visits to state $j$ satisfies \eqref{potential} because one can either start with $Q_0=j$, which happens with probability $p_{0,j}$, or
come to $j$ from some state $i$, and the mean number of such transitions is $g_ip_{i,j}$, so equation \eqref{potential} holds for arbitrary Markov chains 
with countable state space. The uniqueness of its solution is evident for weakly increasing Markov chains with $p_{i,j}=0$ for $j<i$. Finally, 
the change of variables \eqref{zmodplus} implies \eqref{hitting} by rearranging terms.
\end{proof}

Let $\initnoarg := (\init{j}, j = 1,2, \ldots)$ be a fixed probability distribution with $\init{j} >0$ for infinitely many $j$.
Then there is the following well known construction of two Markov chains derived from $\initnoarg$.
The {\em weak record chain} generated by $\initnoarg$ is the sequence of weak upper record values
generated by an i.i.d.\ sequence with distribution $\initnoarg$.
The {\em strict record chain} generated by $\initnoarg$ is similarly generated sequence of strict upper record values,
which is the sequence of values of the weak record chain watched only when there is a change of value.
It is known 
\cite{MR0362457} 
and easily verified that the weak record chain is Markov with initial distribution $\initnoarg$ and transition matrix
\begin{equation}
\label{rhoj}
p_{i,j}^{\le} = \frac{ \init{j} \ind ( i \le j ) } { \init{i} + \init{i+1} + \cdots}
\end{equation}
while the strict record chain is Markov with initial distribution $\initnoarg$ and transition matrix
\begin{equation}
\label{rhojstrict}
p_{i,j}^{<} = \frac{ \init{j} \ind ( i < j ) } { \init{i+1} + \init{i+2} + \cdots}\,.
\end{equation}
It is also well known \cite[Theorems 16.1, 16.8]{MR1791071} that each of these Markov chains derived from $\initnoarg$ has the special property that
\begin{equation}\label{defGj}
\mbox{ \em the counts $G_j:= \sum_{k=0}^\infty \ind ( Q_k = j)$ are independent as $j$ varies }
\end{equation}
where the distribution of $G_j$ is 
\begin{itemize}
\item geometric$(1 - p_{j,j}^{\le})$ for the weak record chain, and 
\item Bernoulli$(p_{j,j}^{\le})$ for the strict record chain.
\end{itemize}

The following proposition offers a converse:
\begin{proposition}
\label{prp:rec}
Let $\Qb$ be a weakly increasing Markov chain with transition matrix $(p_{i,j})$ on the set of positive integers,
with initial distribution $\initnoarg$ such that $\init{j} >0$ for infinitely many $j$.
Suppose that 
the occupation times $G_j$ defined by \eqref{defGj} satisfy\/ $\P(G_j < \infty) = 1$ and\/ $\P(G_j = 0) < 1$ for every $j$,
and 
are independent. Then
\begin{equation}
\label{hledef}
h_j : = \P( G_j \ge 1 ) = p_{j,j}^{\le}
\end{equation}
is derived from $\initnoarg$ as in \eqref{rhoj}, which implies
\begin{equation}
\label{infprod}
\prod_{j=1}^\infty (1 - h_j ) \, = \, 0.
\end{equation}
Moreover, the transition matrix $(p_{i,j})$ of $\Qb$ is of the form
\begin{align}
\label{pijfromii}
p_{i,j} &{}= \ind ( j = i ) p_{i,i} + \ind  ( j>i ) ( 1- p_{i,i})  h_j \prod_{ k = i+1}^{j-1} ( 1 - h_k)\\
\label{pijfrominit}
&{}=\ind ( j =i ) p_{i,i}+\ind ( j>i )(1-p_{i,i})p_{i,j}^{<}
\end{align}
for some arbitrary sequence of self-transition probabilities $p_{i,i}$ with $p_{i,i} < 1$, where $p_{i,j}^{<}$ is defined by \eqref{rhojstrict}.
In particular, continuing to assume that the $G_j$ are independent, the following three conditions are equivalent:
\begin{itemize}
\item $p_{i,i} = h_i$ for all $i$;
\item each of the $G_j$'s is geometrically distributed on $\{0,1,2,\ldots\}$;
\item $\Qb$ is the weak record chain derived from $\initnoarg$;
\end{itemize}
and so too are the following three conditions
\begin{itemize}
\item $p_{i,i} = 0$ for all $i$;
\item each of the $G_j$'s has a Bernoulli distribution on $\{0,1\}$;
\item $\Qb$ is the strict record chain derived from $\initnoarg$.
\end{itemize}
In any case, whatever the self-transition probabilities $p_{i,i}$,
the chain $\Qb$ watched only when it changes state is a copy of the strict record chain derived from $\initnoarg$.
\end{proposition}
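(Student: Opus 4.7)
\textbf{Proof plan for Proposition \ref{prp:rec}.}
The strategy is to reduce everything to the strictly increasing \emph{jumping chain} $\hat Q$ obtained from $\Qb$ by watching it only at times when its value actually changes. Because $\Qb$ is weakly increasing with $p_{j,j} < 1$ for every $j$, the strong Markov property gives a decomposition
\[
G_j = \ind(\hat G_j \geq 1)\, H_j, \qquad \hat G_j := \ind(\hat Q_k = j \text{ for some } k),
\]
in which $H_j$ is the time spent at state $j$ on the (possibly empty) visit, distributed geometrically$(1-p_{j,j})$ on $\{1,2,\dots\}$, with the $(H_j)$ mutually independent and independent of $\hat Q$. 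Hence the $G_j$ are independent if and only if the visit indicators $\hat G_j$ are independent. So the whole problem is to characterise strictly increasing Markov chains on $\{1,2,\dots\}$ with initial law $\initnoarg$ whose visit indicators are independent.

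The first key computation uses the identity of events $\{\hat Q_0 > j\} = \{\hat G_1 = \cdots = \hat G_j = 0\}$. Independence turns its probability into $\prod_{k=1}^j (1-h_k)$, while the initial distribution makes it $\sum_{k>j} \init{k}$. Differencing consecutive $j$'s gives $\init{j} = h_j \prod_{k<j}(1-h_k)$ and therefore
\[
h_j = \frac{\init{j}}{\sum_{k\ge j}\init{k}} = p_{j,j}^{\le},
\]
which is formula \eqref{rhoj}. Since $\P(\hat Q_0<\infty)=1$, letting $j\to\infty$ forces $\prod_{j}(1-h_j)=0$, giving \eqref{infprod}. For the off-diagonal entries of the jumping kernel $q_{i,j}:=p_{i,j}/(1-p_{i,i})$, I compute $\P(\hat G_i = 1,\ \hat G_{i+1}=\cdots=\hat G_{j-1}=0,\ \hat G_j = 1)$ two ways: by independence it equals $h_i\,h_j\prod_{i<k<j}(1-h_k)$, while by the strong Markov property at the visit to $i$ (noting $\hat Q$ is strictly increasing, so the only way to visit $j$ without visiting intermediate states is to jump there directly) it equals $h_i\, q_{i,j}$. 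Cancelling $h_i>0$ gives $q_{i,j} = h_j\prod_{i<k<j}(1-h_k)$, which after the telescoping $\prod_{i<k<j}(1-h_k) = \sum_{k\ge j}\init{k}\big/\sum_{k>i}\init{k}$ collapses exactly to the strict-record kernel $p_{i,j}^{<}$ of \eqref{rhojstrict}. The identity $\sum_{j>i}q_{i,j}=1$ follows from \eqref{infprod} applied to the tail product $\prod_{k>i}(1-h_k)=0$. Substituting $q_{i,j}=p_{i,j}^{<}$ into the decomposition $p_{i,j}=p_{i,i}\ind(j=i)+(1-p_{i,i})q_{i,j}\ind(j>i)$ yields \eqref{pijfromii} and \eqref{pijfrominit}, and in particular the jumping chain of $\Qb$ is, irrespective of the self-transition probabilities, the strict record chain of $\initnoarg$.

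For the two sets of equivalences I appeal to Proposition \ref{prp:transmc}(i), which tells us $\P(G_j\ge k) = h_j p_{j,j}^{k-1}$ for $k\ge 1$. This is geometric on $\{0,1,2,\dots\}$ iff $p_{j,j}=h_j$, and Bernoulli iff $p_{j,j}=0$. Substituting $p_{j,j}=h_j$ into \eqref{pijfromii} and using the same telescoping as above gives $p_{i,j}=p_{i,j}^{\le}$ (the weak record kernel of \eqref{rhoj}); substituting $p_{j,j}=0$ gives $p_{i,j}=p_{i,j}^{<}$ (the strict record kernel). The converse implications from ``$\Qb$ is the weak/strict record chain'' to the distributional statements about $G_j$ are classical and were already recalled just before the statement. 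The genuine work, and the main obstacle to this plan, is the algebraic step identifying $h_j\prod_{i<k<j}(1-h_k)$ with $p_{i,j}^{<}$, together with the care needed to verify that $0<h_j<1$ for every $j$ (which follows from the standing assumption $\P(G_j=0)<1$ together with the condition that $\init{j}>0$ for infinitely many $j$, ruling out $h_j=1$); everything else is bookkeeping.
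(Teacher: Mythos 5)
Your proposal is correct, and its skeleton matches the paper's: both proofs use the zero-modified geometric structure of Proposition \ref{prp:transmc}, both get \eqref{infprod} from the identity $\P(Q_0>j)=\prod_{k\le j}(1-h_k)$ forced by independence together with $\P(Q_0<\infty)=1$, and both obtain \eqref{pijfromii} by the same jump argument (your two-way evaluation of $\P(G_i\ge 1,\,G_{i+1}=\cdots=G_{j-1}=0,\,G_j\ge 1)$ is exactly the rigorous form of the paper's terser ``if the chain jumps from $i$ to $j$ it does not stay at $i$ and then $G_{i+1}=\cdots=G_{j-1}=0$, $G_j>0$'' step). The genuine difference is how \eqref{hledef} is established: the paper proves $h_j=p^{\le}_{j,j}$ by checking, via an induction it describes as tedious but straightforward, that $p^{\le}_{j,j}$ solves the last-exit system \eqref{hitting}, and then invokes the uniqueness statement of Proposition \ref{prp:transmc}; you instead read $h_j=\init{j}/\sum_{k\ge j}\init{k}$ directly by differencing the two evaluations of $\P(Q_0>j)$ (tail of $\initnoarg$ versus $\prod_{k\le j}(1-h_k)$), which avoids both the induction and the appeal to uniqueness, and then the telescoping product $\prod_{i<k<j}(1-h_k)=\sum_{k\ge j}\init{k}/\sum_{k>i}\init{k}$ turns \eqref{pijfromii} into \eqref{pijfrominit} and identifies the jumping kernel with \eqref{rhojstrict}; this is arguably the cleaner route, and it makes the final ``jumping chain equals strict record chain'' claim immediate rather than a consequence of comparing formulas \eqref{zmod}, \eqref{rhoj}, \eqref{rhojstrict} at the end. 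Two small housekeeping points you should fold in explicitly: $p_{j,j}<1$ is not a hypothesis but follows from $\P(G_j=0)<1$ together with $\P(G_j<\infty)=1$ (otherwise $j$ would be absorbing and reachable), and your cancellation of $h_i$ and division by $\prod_{k<j}(1-h_k)$ are licensed precisely by $h_j>0$ (from $\P(G_j=0)<1$) and $h_k<1$ (from $\init{m}>0$ for infinitely many $m$), as you note; with those remarks made, the proof is complete.
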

\begin{proof}
By the general theory of transient Markov chains reviewed in Proposition \ref{prp:transmc},
the distribution of $\Gb$ is that of a sequence
of random variables with zero-modified geometric$(h_j, 1 - p_{j,j})$ distributions, as displayed
in \eqref{zmod}. The assumed independence of the counts $\Gb$ implies that for $j \ge 1$
\begin{align} 
\P(Q_0 > j ) &{}= \P( G_1 = 0, \ldots, G_{j-1} = 0, G_j = 0 )\\
&{} = \prod_{i=1}^{j} (1 - h_i) .
\end{align}
The assumption that $\P(Q_0  < \infty)$ then gives \eqref{infprod}.
 On the other hand, it is clear that the entire path of the weakly increasing Markov chain $\Qb$ can be recovered
with probability one from its sequence of occupation times $\Gb$. So the transition matrix of $\Qb$ is a function of
the two sequences $(h_j)$ and $(p_{j,j})$ which should be chosen consistently with the initial distribution to make counts $\Gb$ independent. 
If the chain jumps from $i$ to $j>i$ it means that it does not stay at $i$, which happens with probability $1-p_{i,i}$, and
given this event that $G_{i+1}=\dots=G_{j-1}=0$ and $G_j>0$, so \eqref{pijfromii} follows from independence of the counts $\Gb$.
Then a tedious but straightforward calculation shows by induction on $j$ that $h_j=p_{j,j}^{\le}$ provides a solution
for \eqref{hitting}, so \eqref{hledef} holds
and gives \eqref{pijfrominit} after some algebra. Finally, both assertions about equivalence of three conditions and the last
assertion of the proposition now follow easily from comparison of \eqref{zmod}, \eqref{rhoj} and \eqref{rhojstrict}.
\end{proof}



\providecommand{\bysame}{\leavevmode\hbox to3em{\hrulefill}\thinspace}
\providecommand{\MR}{\relax\ifhmode\unskip\space\fi MR }
\providecommand{\MRhref}[2]{%
  \href{http://www.ams.org/mathscinet-getitem?mr=#1}{#2}
}
\providecommand{\href}[2]{#2}

\bibliographystyle{plain}
\bibliography{rambib,../gemmax,../0ergodic}

\end{document}